\newtheorem{thm}{Theorem}[section]
\newtheorem{theorem}[thm]{Theorem}
\newtheorem{definition}[thm]{Definition}
\newtheorem{example}[thm]{Example}
\newtheorem{remark}[thm]{Remark}
\newtheorem{question}[thm]{Question}
\newtheorem{lemma}[thm]{Lemma}
\newtheorem{corollary}[thm]{Corollary}
\def\cl{\mathrm{cl}}
\def\Int{\mathrm{Int}}
\def\cprime{$'$}
\begin{document}

\title{The non-Urysohn number of a topological space}
\author{Ivan S. Gotchev\\
Department of Mathematical Sciences,\\
Central Connecticut State University,\\
New Britain, CT 06050, USA\\
E-mail: gotchevi@ccsu.edu}

\date{}

\maketitle

%% Classification and key words; note that the 2010 classification is used:

\renewcommand{\thefootnote}{}

\footnote{2010 \emph{Mathematics Subject Classification}: Primary 54A25; 
Secondary 54D10}

\footnote{\emph{Key words and phrases}: Cardinal function, $\theta$-closure, $\theta$-closed set, Urysohn number of a space, non-Urysohn number of a space,  
(maximal) finitely non-Urysohn subset of a space.}

\renewcommand{\thefootnote}{\arabic{footnote}}
\setcounter{footnote}{0}

\begin{abstract}
We call a nonempty subset $A$ of a topological space $X$  
\emph{finitely non-Urysohn} if for every nonempty finite subset $F$ of $A$ and 
every family $\{U_x:x\in F\}$ of open neighborhoods $U_x$ of $x\in F$, 
$\cap\{\mathrm{cl}(U_x):x\in F\}\ne\emptyset$ and 
we define \emph{the non-Urysohn number of $X$} 
as follows: $nu(X):=1+\sup\{|A|:A$ is a finitely non-Urysohn subset of 
$X\}$.\\
Then for any topological space $X$ and any subset $A$ of $X$ we prove 
the following inequalities: (1) $|\cl_\theta(A)|\le |A|^{\kappa(X)}\cdot nu(X)$,
(2) $|[A]_\theta|\le (|A|\cdot nu(X))^{\kappa(X)}$, 
(3) $|X|\le nu(X)^{\kappa(X)sL_\theta(X)}$, and
(4) $|X|\le nu(X)^{\kappa(X)aL(X)}$.\\
In 1979, A. V. Arhangel{\cprime}ski{\u\i} asked if the inequality $|X|\le 2^{\chi(X)wL_c(X)}$ was true for every Hausdorff space $X$. It follows from the third inequality that the answer of this question is in the affirmative for all spaces with $nu(X)$ not greater than the cardinality of the continuum.\\
We also give a simple example of a Hausdorff space $X$ such that 
$|\cl_\theta(A)|>|A|^{\chi(X)}U(X)$ and 
$|\cl_\theta(A)|>(|A|\cdot U(X))^{\chi(X)}$, where $U(X)$ is the Urysohn 
number of $X$, recently introduced by Bonanzinga, Cammaroto and 
Matveev. This example shows that in (1) and (2) above, $nu(X)$ cannot 
be replaced by $U(X)$ and answers some questions posed by Bella and 
Cammaroto (1988), Bonanzinga, Cammaroto and Matveev (2011), and Bonanzinga 
and Pansera (2012).
\end{abstract}

\section{Introduction}

Let $X$ be a topological space and for $x\in X$ let $\mathcal{N}_x$ denote 
the family of all open neighborhoods of $x$ in $X$. For a nonempty subset $A$ 
of $X$ we denote by $\mathcal{U}_A$ the set of all families 
$\mathcal{U}=\{U_a: a\in A, U_a\in\mathcal{N}_a\}$ and by  
$\mathcal{C}_A$ the set of all families 
$\mathcal{C}=\{\overline{U}_a: a\in A, U_a\in\mathcal{N}_a\}$.

The \emph{$\theta$-closure} of a set $A$ in a space $X$ is the set 
$\cl_\theta(A) = \{x\in X :$ for every 
$U\in \mathcal{N}_x, \overline{U}\cap A \ne \emptyset\}$. $A$ is 
called \emph{$\theta$-closed} if $A=\cl_\theta(A)$ and $A$ is 
$\theta$-dense if $\cl_\theta(A)=X$ (see \cite{Vel66}). 
The smallest $\theta$-closed set containing $A$, i.e. the intersection 
of all $\theta$-closed sets containing $A$, is denoted by $[A]_\theta$ 
and is called \emph{the $\theta$-closed hull of $A$} \cite{BelCam88}.
The $\theta$-density of a space $X$ is 
$d_\theta(X):=\min\{|A|:A\subset X, \cl_\theta(A)=X\}$.

Recall that a space $X$ is called \emph{Urysohn} if every two distinct 
points in $X$ have disjoint closed neighborhoods. 

\begin{definition}[\cite{AlaKoc00}]
For a topological space $X$, $\kappa(X)$ is the smallest cardinal 
number $\kappa$ such that for each point $x\in X$, there is a 
collection $\mathcal{V}_x$ of closed neighborhoods of $x$ such that 
$|\mathcal{V}_x|\le \kappa$ and if $W\in \mathcal{N}_x$  
then $\overline{W}$ contains a member of $\mathcal{V} _x$.
\end{definition}

\begin{remark}
In \cite{AlaKoc00}, $\kappa(X)$ is defined only for Hausdorff spaces but clearly $\kappa(X)$ is well-defined for every topological space $X$. Also, an example of a Urysohn space $X$ is constructed in \cite{AlaKoc00} such that $\kappa(X)<\chi(X)$, where $\chi(X)$ is the character of the space $X$.
\end{remark}

\begin{definition}[\cite{BonCamMat11}] \label{D1}
The \emph{Urysohn number} of a topological space 
$X$, denoted by $U(X)$, is the smallest cardinal $\kappa$ such that for 
every $A\subset X$ with $|A|\ge \kappa$, there exists a family 
$\mathcal{C}\in\mathcal{C}_A$ such that 
$\cap\mathcal{C} =\emptyset$.
\end{definition}

Spaces $X$ with $U(X)=n$ for some integer $n\ge 2$ appeared first in \cite{BonCamMat07} and 
\cite{BonCamMatPan08} under the name \emph{$n$-Urysohn} and were studied further in \cite{BonPan12}. In \cite{CamCatPanTsa12} such spaces were called \emph{finitely-Urysohn}.

Clearly, $U(X)\ge 2$ and $U(X)\le |X|^+$ for every topological space 
$X$. If $X$ is Hausdorff then $U(X)\le |X|$ and $X$ is Urysohn if and 
only if $U(X)=2$ \cite{BonCamMat11}.

\section{On some questions related to the cardinality of the $\theta$-closed hull}\label{S2}

It was shown in \cite[Theorem 1]{BelCam88} that for every Urysohn space 
$X$, $|[A]_\theta|\le |A|^{\chi(X)}$ and the authors asked if that 
inequality holds true for every Hausdorff space (see 
\cite[Question]{BelCam88}). In \cite{BonCamMat11} the authors extended 
that result to all spaces with finite Urysohn number.

\begin{theorem}[{\cite[Proposition 4]{BonCamMat11}}]\label{TBCM}
For a set $A$ in a space $X$, if $U(X)$ is finite then 
$|[A]_\theta|\le |A|^{\chi(X)}$.
\end{theorem}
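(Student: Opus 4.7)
The plan is to iterate the $\theta$-closure transfinitely while maintaining the target cardinality bound at every stage. Let $n:=U(X)<\omega$, $\chi:=\chi(X)$, and $\mu:=|A|^{\chi}$; by a separate easy argument we may assume $|A|$ is infinite, so $\mu\ge 2^\chi\ge\chi^+$. Set $A_0:=A$, $A_{\alpha+1}:=\cl_\theta(A_\alpha)$, and $A_\lambda:=\bigcup_{\beta<\lambda}A_\beta$ at limit ordinals $\lambda$. The aim is to prove (i) $|A_\alpha|\le\mu$ for every $\alpha\le\chi^+$, and (ii) $A_{\chi^+}$ is $\theta$-closed, which together force $[A]_\theta\subseteq A_{\chi^+}$ and deliver the inequality.

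The crucial ingredient is the one-step lemma: \emph{if $U(X)$ is finite and $B\subseteq X$, then $|\cl_\theta(B)|\le|B|^\chi$.} For each $x\in\cl_\theta(B)$ fix an open local base $\{V_{x,\alpha}\}_{\alpha<\chi}$ at $x$. For every finite $F\subseteq\chi$, $\bigcap_{\alpha\in F}V_{x,\alpha}$ is an open neighborhood of $x$, so the hypothesis $x\in\cl_\theta(B)$ allows one to choose
\[ b_{x,F}\in\overline{\textstyle\bigcap_{\alpha\in F}V_{x,\alpha}}\cap B. \]
This defines $\Phi:\cl_\theta(B)\to B^{[\chi]^{<\omega}}$ by $\Phi(x):=(b_{x,F})_{F\in[\chi]^{<\omega}}$, and the codomain has cardinality $|B|^\chi$, so it suffices to show that every fibre of $\Phi$ has fewer than $n$ elements.

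This fibre bound is where the $n$-Urysohn hypothesis enters, and it is the main obstacle of the argument. Suppose to the contrary that distinct $x_1,\dots,x_n\in\cl_\theta(B)$ share an image $s:=\Phi(x_1)=\dots=\Phi(x_n)$. Applying $U(X)=n$ to $\{x_1,\dots,x_n\}$ yields open neighborhoods $W_i\ni x_i$ with $\bigcap_{i=1}^n\overline{W_i}=\emptyset$; for each $i$ pick $\alpha_i<\chi$ with $V_{x_i,\alpha_i}\subseteq W_i$, and let $F:=\{\alpha_1,\dots,\alpha_n\}$. Then for every $i$,
\[ s(F)=b_{x_i,F}\in\overline{\textstyle\bigcap_{\alpha\in F}V_{x_i,\alpha}}\subseteq\overline{V_{x_i,\alpha_i}}\subseteq\overline{W_i}, \]
so $s(F)\in\bigcap_{i=1}^n\overline{W_i}=\emptyset$, a contradiction. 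The essential point — and the reason for indexing $\Phi$ by finite subsets $F$ rather than by single ordinals — is that this encoding furnishes a single index $F$ at which $s(F)$ must lie simultaneously in all of $\overline{W_1},\dots,\overline{W_n}$; a coarser single-index encoding does not close the gap.

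Given the lemma, the remaining induction is routine. At successors, $|A_{\alpha+1}|=|\cl_\theta(A_\alpha)|\le|A_\alpha|^\chi\le\mu^\chi=\mu$; at a limit $\lambda\le\chi^+$, $|A_\lambda|\le|\lambda|\cdot\mu\le\chi^+\cdot\mu=\mu$. For closure under $\theta$-limits, take $x\in\cl_\theta(A_{\chi^+})$; for each $\alpha<\chi$ pick a witness $c_\alpha\in\overline{V_{x,\alpha}}\cap A_{\chi^+}$, so $c_\alpha\in A_{\beta_\alpha}$ for some $\beta_\alpha<\chi^+$. Since $\mathrm{cf}(\chi^+)=\chi^+>\chi$, $\gamma:=\sup_{\alpha<\chi}\beta_\alpha<\chi^+$, whence $x\in\cl_\theta(A_\gamma)=A_{\gamma+1}\subseteq A_{\chi^+}$, as required.
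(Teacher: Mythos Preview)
Your argument is correct. The paper does not prove Theorem~\ref{TBCM} itself (it is quoted from \cite{BonCamMat11}), but its proof of the generalization Theorem~\ref{TIG3} uses essentially the same two-step strategy you employ: a one-step bound $|\cl_\theta(B)|\le |B|^{\chi}$ obtained via a finite-to-one encoding map, followed by a transfinite iteration of length $\chi^+$ (respectively $\kappa(X)^+$) to reach the $\theta$-closed hull.

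The only noteworthy difference is in the encoding used for the one-step lemma. The paper assigns to each $x\in\cl_\theta(B)$ the \emph{centered family} $\Gamma_x=\{V\cap A_x:V\in\mathcal{V}_x\}$, where $A_x$ is the set of chosen witnesses, and argues that if several points share the same $\Gamma$ then $\Gamma$ fails to be centered. You instead index the witnesses by finite subsets $F\in[\chi]^{<\omega}$ and observe that a single coordinate $s(F)$ must fall into the empty intersection $\bigcap_i\overline{W_i}$. Your encoding is arguably more elementary---it avoids the auxiliary notion of a centered family and makes the role of finiteness of $U(X)$ completely transparent---while the paper's centered-family formulation generalizes more smoothly to the infinite case $nu(X)$ treated in Theorem~\ref{TIG3}. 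Both encodings yield the same fibre bound and the same arithmetic, so the two proofs are minor variants of one another rather than genuinely different approaches.

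One small remark: your reduction ``we may assume $|A|$ is infinite'' deserves a sentence of justification (e.g., enlarge a finite $A$ with $|A|\ge 2$ to a countable $A'\subseteq X$ and use $[A]_\theta\subseteq[A']_\theta$ together with $|A'|^{\chi}=2^{\chi}=|A|^{\chi}$; the case $|X|\le\omega$ is trivial). This is indeed easy, but not entirely vacuous.
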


Since the proof given in \cite{BonCamMat11} does not apply for spaces 
with infinite Urysohn numbers the authors naturally asked the following 
question.

\begin{question}[{\cite[Question 5]{BonCamMat11}}]\label{QBCM1}
Is it true that for a set $A$ in a (assume Hausdorff if necessary) 
space $X$, $|[A]_\theta|\le |A|^{\chi(X)}U(X)$?
\end{question}

In \cite{BonPan12} the authors improved the inequality in 
Theorem \ref{TBCM} as follows and asked if even a stronger inequality 
than the one in Question \ref{QBCM1} holds true.

\begin{theorem}[{\cite[Proposition 7]{BonPan12}}]\label{TBP}
For a set $A$ in a space $X$, if $U(X)$ is finite then 
$|[A]_\theta|\le |A|^{\kappa(X)}$.
\end{theorem}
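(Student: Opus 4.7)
The plan is to reduce the theorem to a one-step bound $|\cl_\theta(B)|\le |B|^{\kappa(X)}$ for arbitrary $B\subseteq X$, and then iterate the $\theta$-closure operator transfinitely to reach $[A]_\theta$.

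For the one-step bound, fix $y\in \cl_\theta(B)$ and enumerate the family guaranteed by the definition of $\kappa(X)$ as $\mathcal{V}_y=\{V_{y,\alpha}:\alpha<\kappa(X)\}$. Any finite intersection $\bigcap_{\beta\in F}V_{y,\beta}$ is again a closed neighborhood of $y$, and since $y\in \cl_\theta(B)$ such a closed neighborhood must meet $B$; hence one may choose $c_y(F)\in B\cap\bigcap_{\beta\in F}V_{y,\beta}$ for every finite $F\subseteq\kappa(X)$. The assignment $y\mapsto c_y$ takes values in $B^{[\kappa(X)]^{<\omega}}$, a set of size $|B|^{\kappa(X)}$ (taking this cardinal to be infinite).

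The critical observation is that no more than $U(X)-1$ distinct points $y$ may share the same encoding $c_y$. If $y_1,\dots,y_{U(X)}$ were all to correspond to the same $c$, then by the definition of $U(X)$ there exist closed neighborhoods $W_i$ of $y_i$ with $\bigcap_iW_i=\emptyset$; by $\kappa(X)$, each $W_i$ contains some $V_{y_i,\alpha_i}\in \mathcal{V}_{y_i}$. Taking $F=\{\alpha_1,\dots,\alpha_{U(X)}\}$, the single value $c(F)$ must lie in $V_{y_i,\alpha_i}$ for every $i$, contradicting $\bigcap_iV_{y_i,\alpha_i}=\emptyset$. Thus $|\cl_\theta(B)|\le(U(X)-1)\cdot|B|^{\kappa(X)}=|B|^{\kappa(X)}$.

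Next I would iterate: set $A_0=A$, $A_{\alpha+1}=\cl_\theta(A_\alpha)$, and $A_\lambda=\bigcup_{\alpha<\lambda}A_\alpha$ at limits. The set $A_{\kappa(X)^+}$ is already $\theta$-closed, because if $y\in \cl_\theta(A_{\kappa(X)^+})$ then each of the at most $\kappa(X)$ members of $\mathcal{V}_y$ meets some $A_{\alpha(V)}$ with $\alpha(V)<\kappa(X)^+$, and the supremum $\alpha^*$ of these ordinals remains below $\kappa(X)^+$ by cofinality; hence every closed neighborhood of $y$ meets $A_{\alpha^*}$ and so $y\in A_{\alpha^*+1}\subseteq A_{\kappa(X)^+}$. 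Therefore $[A]_\theta=A_{\kappa(X)^+}$, and induction on $\alpha\le\kappa(X)^+$ using the one-step bound at successors and $|A_\lambda|\le|\lambda|\cdot\sup_{\alpha<\lambda}|A_\alpha|$ at limits keeps $|A_\alpha|\le|A|^{\kappa(X)}$ throughout.

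The main obstacle is the one-step bound. The families $\mathcal{V}_y$ live in different local pieces of $X$, so a pointwise encoding $f_y(\alpha)\in V_{y,\alpha}\cap B$ cannot be directly compared across distinct $y$. The remedy is to index the encoding by finite subsets $F\subseteq\kappa(X)$ rather than single indices; this is precisely what allows the diagonal choice $F=\{\alpha_1,\dots,\alpha_{U(X)}\}$ to convert the finiteness of $U(X)$ into a contradiction.
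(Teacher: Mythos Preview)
Your argument is correct and follows the same two-stage architecture as the paper's proof of Theorem~\ref{TIG3} (of which the present statement is the special case $nu(X)=U(X)<\omega$): first a one-step bound $|\cl_\theta(B)|\le|B|^{\kappa(X)}$ via a finite-to-one encoding, then transfinite iteration of $\cl_\theta$ up to length $\kappa(X)^+$ to reach $[A]_\theta$.

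The encodings used for the one-step bound differ. The paper assigns to each $x\in\cl_\theta(B)$ the \emph{centered family} $\Gamma_x=\{V\cap A_x:V\in\mathcal V_x\}\subseteq[B]^{\le\kappa(X)}$, where $A_x$ is a set of witnesses chosen one per $V\in\mathcal V_x$; if too many points share the same $\Gamma$, one finds closed neighborhoods with empty intersection and deduces that $\Gamma$ is not centered. You instead assign to $y$ the \emph{function} $c_y:[\kappa(X)]^{<\omega}\to B$, building the finite-intersection structure directly into the domain, so that the single diagonal value $c(\{\alpha_1,\dots,\alpha_{U(X)}\})$ immediately furnishes the contradiction. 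Your version is arguably more direct and makes the role of the \emph{finiteness} of $U(X)$ very visible (the diagonal set $F$ must be finite); the paper's centered-family encoding, on the other hand, is what generalises cleanly to the infinite case and motivates the definition of $nu(X)$.
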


\begin{question}[{\cite[Question 9]{BonPan12}}]\label{QBCM2}
Is it true that for a set $A$ in a (Hausdorff) space $X$, 
$|[A]_\theta|\le |A|^{\kappa(X)}U(X)$?
\end{question}

The following example shows that the answer of Question \ref{QBCM1} 
(and therefore of the other two questions) is in the negative even for 
Hausdorff spaces with Urysohn numbers $U(X)=\omega$. 
Even more, our example shows that for Hausdorff spaces it is even possible that 
$|\cl_\theta(A)|>|A|^{\chi(X)}U(X)$ and 
$|[A]_\theta|>(|A|\cdot U(X))^{\chi(X)}$.
(For a different example see \cite[Example 3]{CamCatPanPor12}). 

\begin{example}\label{E1}
Let $\mathbb{N}$ denote the set of all positive integers, for 
$m\in \mathbb{N}$ let $\mathbb{N}_m:=\{n:n\in\mathbb{N}, n\ge m\}$, 
$\mathbb{R}$ be the set of all real numbers, and 
$\mathfrak{c}=|\mathbb{R}|$. Let 
also $S:=\{1/n: n\in \mathbb{N}\}\cup\{0\}$ and $\mathbb{N}\times S$
be the subspace of $\mathbb{R}\times\mathbb{R}$ 
with the inherited topology from $\mathbb{R}\times\mathbb{R}$. 
Let $\alpha$ be an initial ordinal and $\{B_\beta:\beta<\alpha\}$ be a 
family of $\alpha$ many pairwise disjoint copies of $\mathbb{N}\times S$.
We will refer to the points in $B_\beta$ as $(n,r)_\beta$, where 
$n\in \mathbb{N}$ and $r\in S$.
For each ordinal number $\beta<\alpha$, 
let $M_\beta:=B_\beta\cup \{\beta\}$ be the topological space with a 
topology such that $\{\beta\}$ is closed in $M_\beta$, all points in 
$B_\beta$ have the topology inherited from $\mathbb{R}\times\mathbb{R}$ 
and the point $\beta$ has as basic neighborhoods all sets of the form 
$\{\beta\}\cup\{\mathbb{N}_m\times (S\setminus\{0\})\}_\beta$, 
$m\in \mathbb{N}$. Now, let $X$ be the topological space 
obtained from the disjoint union of all spaces $M_\beta$, 
$\beta<\alpha$ after identifying for each $n\in\mathbb{N}$ all points 
of the form $(n,0)_\beta$, $\beta<\alpha$. We will denote those points 
by $(n,0)$. Then it is not difficult to verify that $X$ is a Hausdorff 
space (but not Urysohn) with Urysohn number $U(X)=\omega$, 
$\chi(X)=\kappa(X)=\omega$, and if $A$ is the subset 
$\{(n,0):n\in\mathbb{N}\}$ of $X$ then 
$|\cl_\theta(A)|=|[A]_\theta|=\alpha$ and 
$|A|^{\chi(X)}U(X)=|A|^{\kappa(X)}U(X)=\omega^\omega\cdot\omega=\mathfrak{c}$.
Therefore if $\alpha > \mathfrak{c}$ then we have 
$|\cl_\theta(A)|>|A|^{\chi(X)}U(X)$ and 
$|\cl_\theta(A)|>(|A|\cdot U(X))^{\chi(X)}$.
\end{example}

\section{Spaces with finite versus spaces with infinite Urysohn numbers}

We begin with the following lemma.

\begin{lemma}\label{LIG0}
Let $A$ be a nonempty subset of a topological space $X$ such that $\cap\mathcal{C}\ne\emptyset$ for every 
$\mathcal{C}\in\mathcal{C}_A$. Then 
$A\subset\cap\{\cl_\theta(\cap\mathcal{C}):\mathcal{C}\in\mathcal{C}_A\}$.
\end{lemma}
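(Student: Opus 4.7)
The plan is to unpack the definition of $\cl_\theta$ and reduce the claim to an intersection statement about a suitably refined family from $\mathcal{C}_A$. Concretely, I need to show that for every $a\in A$, every $\mathcal{C}=\{\overline{U}_b:b\in A\}\in\mathcal{C}_A$, and every $V\in\mathcal{N}_a$, one has $\overline{V}\cap(\cap\mathcal{C})\ne\emptyset$.

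The key step is a refinement trick. Given such $a$, $\mathcal{C}$, and $V$, the set $V\cap U_a$ is still an open neighborhood of $a$, so replacing $U_a$ by $V\cap U_a$ while keeping the $U_b$ for $b\ne a$ produces a new family $\mathcal{C}'\in\mathcal{C}_A$, namely
\[
\mathcal{C}'=\{\overline{V\cap U_a}\}\cup\{\overline{U}_b:b\in A,\ b\ne a\}.
\]
By the hypothesis $\cap\mathcal{C}'\ne\emptyset$, so I can pick a point $x$ in this intersection. Since $V\cap U_a\subset V$ and $V\cap U_a\subset U_a$ we get $\overline{V\cap U_a}\subset\overline{V}\cap\overline{U}_a$, so $x\in\overline{V}$ and $x\in\overline{U}_a$; combined with $x\in\overline{U}_b$ for $b\ne a$, this gives $x\in\overline{V}\cap(\cap\mathcal{C})$, as required. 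Hence $a\in\cl_\theta(\cap\mathcal{C})$, and since $a\in A$ and $\mathcal{C}\in\mathcal{C}_A$ were arbitrary, the inclusion $A\subset\cap\{\cl_\theta(\cap\mathcal{C}):\mathcal{C}\in\mathcal{C}_A\}$ follows.

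There is no real obstacle here; the only nontrivial point is recognizing that to certify $a\in\cl_\theta(\cap\mathcal{C})$ one may test the defining condition one neighborhood $V$ of $a$ at a time, and that such a test amounts to applying the hypothesis to a single perturbation of $\mathcal{C}$ obtained by shrinking its $a$-th component to $V\cap U_a$. The inclusion $\overline{V\cap U_a}\subset\overline{V}\cap\overline{U}_a$ is the only topological fact used, and it holds in any topological space, so no separation axiom on $X$ is needed.
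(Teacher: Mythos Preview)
Your proof is correct and follows essentially the same approach as the paper's: both arguments work by refining the $a$-th component of a given family $\mathcal{C}\in\mathcal{C}_A$ with an additional neighborhood of $a$ and invoking the hypothesis on the refined family. The only difference is cosmetic---the paper phrases it as a proof by contradiction (assume some $a_0\notin\cl_\theta(\cap\mathcal{C}_0)$ and derive an empty intersection), whereas you argue directly---but the underlying idea and the key inclusion $\overline{V\cap U_a}\subset\overline{V}\cap\overline{U}_a$ are the same.
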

\begin{proof}
Let $\mathcal{C}_0\in\mathcal{C}_A$ and let $G=\cap\mathcal{C}_0\neq \emptyset$. 
Suppose that there exists $a_0\in A$ such that 
$a_0\notin \cl_\theta(G)$. Then there is $W_{a_0}\in \mathcal{N}_{a_0}$ 
such that $\overline{W}_{a_0}\cap G=\emptyset$. Let 
$\overline{V}_{a_0}:= \overline{U}_{a_0} \cap \overline{W}_{a_0}$, where 
$\overline{U}_{a_0}\in \mathcal{C}_0$ and $U_{a_0}\in \mathcal{N}_{a_0}$. 
Then the family $\mathcal{C}_1:=\{\overline{V}_{a_0}\}\cup \{\overline{U}_a:U_a\in\mathcal{C}_0,a\in A\setminus\{a_0\}\}$ has  
the property that $\cap\mathcal{C}_1=\emptyset$, a 
contradiction. Therefore $A\subset \cl_\theta(\cap\mathcal{C})$ for 
every $\mathcal{C}\in\mathcal{C}_A$, hence 
$A\subset\cap\{\cl_\theta(\cap\mathcal{C}):\mathcal{C}\in\mathcal{C}_A\}$.
\end{proof}

\begin{theorem}\label{TIG1}
Let $X$ be a topological space and $1<n<\omega$. Then $U(X)=n$  
if and only if there exists a set $A\subset X$ with $|A|=n-1$ such that 
$A=\cap\{\cl_\theta(\cap\mathcal{C}):\mathcal{C}\in\mathcal{C}_A\}$
and 
$\cap\{\cl_\theta(\cap\mathcal{C}):\mathcal{C}\in\mathcal{C}_B\}=\emptyset$ 
for every set $B$ with $|B|\ge n$. 
\end{theorem}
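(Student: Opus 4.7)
The plan is to prove the two directions separately, using Lemma~\ref{LIG0} (and its contrapositive) as the main tool in each.

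For the ($\Leftarrow$) direction, assume an $A$ with $|A|=n-1$ satisfies the two stated conditions. Since $n\ge 2$, $A\neq\emptyset$, so the fixed-point equation forces every $\mathcal{C}\in\mathcal{C}_A$ to have $\cap\mathcal{C}\neq\emptyset$ (otherwise $\cl_\theta(\cap\mathcal{C})=\emptyset$ would collapse the big intersection to $\emptyset$), which yields $U(X)\ge n$. For the reverse inequality, take $B\subseteq X$ with $|B|\ge n$. Since $B\neq\emptyset$ but $\cap\{\cl_\theta(\cap\mathcal{C}):\mathcal{C}\in\mathcal{C}_B\}=\emptyset$, the containment $B\subseteq\cap\{\cl_\theta(\cap\mathcal{C}):\mathcal{C}\in\mathcal{C}_B\}$ fails, so the contrapositive of Lemma~\ref{LIG0} produces some $\mathcal{C}\in\mathcal{C}_B$ with $\cap\mathcal{C}=\emptyset$. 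Hence $U(X)\le n$.

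For the ($\Rightarrow$) direction, assume $U(X)=n$. Since $U(X)>n-1$, I pick $A_0\subseteq X$ with $|A_0|=n-1$ such that every $\mathcal{C}\in\mathcal{C}_{A_0}$ has $\cap\mathcal{C}\neq\emptyset$ (restricting a larger witness if necessary, noting that this property passes to subsets by extending families with further closed neighborhoods). I claim $A_0$ is the desired $A$. Lemma~\ref{LIG0} hands over the inclusion $A_0\subseteq F(A_0):=\cap\{\cl_\theta(\cap\mathcal{C}):\mathcal{C}\in\mathcal{C}_{A_0}\}$ for free. The main obstacle---and the technical heart of the theorem---is the reverse inclusion. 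Suppose for contradiction that $x\in F(A_0)\setminus A_0$; then $|A_0\cup\{x\}|=n$, and by $U(X)\le n$ there is $\mathcal{C}=\{\overline{U}_a:a\in A_0\}\cup\{\overline{U}_x\}\in\mathcal{C}_{A_0\cup\{x\}}$ with $\cap\mathcal{C}=\emptyset$. The restriction $\mathcal{C}':=\{\overline{U}_a:a\in A_0\}$ lies in $\mathcal{C}_{A_0}$ and satisfies $\cap\mathcal{C}'\cap\overline{U}_x=\emptyset$. But $x\in F(A_0)\subseteq\cl_\theta(\cap\mathcal{C}')$ forces $\overline{U}_x$ to meet $\cap\mathcal{C}'$, a contradiction. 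Hence $F(A_0)=A_0$.

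The second required condition, $\cap\{\cl_\theta(\cap\mathcal{C}):\mathcal{C}\in\mathcal{C}_B\}=\emptyset$ for $|B|\ge n$, is then automatic from $U(X)\le n$: any $\mathcal{C}^*\in\mathcal{C}_B$ with $\cap\mathcal{C}^*=\emptyset$ contributes $\cl_\theta(\emptyset)=\emptyset$ to the big intersection. The crux of the argument is thus the equality $F(A_0)=A_0$; the key move is to pair an arbitrary family $\mathcal{C}'\in\mathcal{C}_{A_0}$ with a single closed neighborhood $\overline{U}_x$, so that the size-$n$ empty-intersection property from $U(X)\le n$ and the defining property of $\theta$-closure can be played against each other.
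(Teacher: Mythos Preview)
Your proof is correct and follows essentially the same route as the paper's: both directions hinge on Lemma~\ref{LIG0}, and the key step---showing that a putative $x\in F(A_0)\setminus A_0$ would make $A_0\cup\{x\}$ a size-$n$ set whose closed-neighborhood families all have nonempty intersection, contradicting $U(X)\le n$---is identical in substance, with your version phrased as the contrapositive of the paper's. The only cosmetic difference is that you explicitly argue the ``every $\cap\mathcal{C}\ne\emptyset$'' property passes to subsets (which is correct but unnecessary here, since $U(X)=n$ already forces the witness to have size exactly $n-1$).
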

\begin{proof}
Suppose first that there exists a subset $A$ of $X$ with  
$|A|=n-1\ge 1$, such that 
$A=\cap\{\cl_\theta(\cap\mathcal{C}):\mathcal{C}\in\mathcal{C}_A\}$
and $\cap\{\cl_\theta(\cap\mathcal{C}):\mathcal{C}\in\mathcal{C}_B\}=\emptyset$ 
for every set $B$ with $|B|\ge n$. Then $A\subset \cl_\theta(\cap\mathcal{C})$ for every $\mathcal{C}\in\mathcal{C}_A$. Hence  $\cap\mathcal{C}\ne \emptyset$ for every 
$\mathcal{C}\in\mathcal{C}_A$ and therefore $U(X)>|A|=n-1$. Thus 
$U(X)\ge n$. Suppose that $U(X)>n$. Then there exists a set $B\subset X$ 
with $|B|= n$ such that $\cap\mathcal{C}\ne \emptyset$ for every 
$\mathcal{C}\in\mathcal{C}_B$. Then it follows from Lemma \ref{LIG0} 
that $B\subset\cap\{\cl_\theta(\cap\mathcal{C}):\mathcal{C}\in\mathcal{C}_B\}=\emptyset$, a contradiction. Therefore $U(X)=n$.

Now let $U(X)=n>1$. Then for every set $B\subset X$ such that $|B|=n$  
there exists $\mathcal{C}\in\mathcal{C}_B$ such that 
$\cap\mathcal{C}=\emptyset$ and therefore $\cap\{\cl_\theta(\cap\mathcal{C}):\mathcal{C}\in\mathcal{C}_B\}=\emptyset$. Also, there exists a set $A$ with $|A|=n-1$ such that 
for every $\mathcal{C}\in\mathcal{C}_A$ we have 
$\cap\mathcal{C}\ne\emptyset$. Then it follows from Lemma \ref{LIG0} 
that $A\subset\cap\{\cl_\theta(\cap\mathcal{C}):\mathcal{C}\in\mathcal{C}_A\}$.  To show that $A=\cap\{\cl_\theta(\cap\mathcal{C}):\mathcal{C}\in\mathcal{C}_A\}$, 
suppose that there is 
$x\in \cap\{\cl_\theta(\cap\mathcal{C}):\mathcal{C}\in\mathcal{C}_A\}\setminus A$. 
Therefore $\overline{U}\cap(\cap\mathcal{C})\ne\emptyset$ for every 
$U\in \mathcal{N}_x$ and every $\mathcal{C}\in\mathcal{C}_A$. 
Then for the set $B:=A\cup\{x\}$ we have that if 
$\mathcal{C'}\in\mathcal{C}_B$ then $\cap\mathcal{C'}\ne\emptyset$. 
Thus $U(X)>|B|=n$, a contradiction.
\end{proof}

\begin{remark}\label{R1}
Consider the sets $A:=\{(n,0):n<\omega\}\subset X$ and $\alpha\subset X$ in Example \ref{E1} and let $B_f$ and $B_i$ be a nonempty finite subset and an infinite subset of $\alpha$, respectively. If $\mathcal{C}\in\mathcal{C}_{B_f}$ then $\cap\mathcal{C}\subset A$ and $B_f\subsetneq\cap\{\cl_\theta(\cap\mathcal{C}):\mathcal{C}\in\mathcal{C}_{B_f}\}=\cl_\theta(A)=\alpha$, while there is $\mathcal{C}\in\mathcal{C}_{B_i}$ such that  $\cap\mathcal{C}=\emptyset$, hence $\cap\{\cl_\theta(\cap\mathcal{C}):\mathcal{C}\in\mathcal{C}_{B_i}\}=\emptyset$.
Therefore the space $X$ in Example \ref{E1} shows that Theorem \ref{TIG1} is not always valid when $U(X)$ is infinite even for Hausdorff spaces $X$, or in other words, the subsets in spaces with finite and infinite Urysohn numbers that determine the Urysohn number have different properties. Therefore we should not be surprised that theorems which are valid for spaces with finite Urysohn numbers are not necessarily valid for spaces with infinite Urysohn numbers (see Section \ref{S2}).
\end{remark}

The following two observations are valid for topological spaces with finite or infinite Urysohn numbers.

\begin{lemma}\label{LIG1}
Let $X$ be a topological space and $A$ be a nonempty subset of $X$. 
If $\cap\mathcal{C}\ne\emptyset$ for every 
$\mathcal{C}\in\mathcal{C}_F$ and every finite nonempty subset $F$ of $A$ then 
$A\subset\cap\{\cl_\theta(\cap\mathcal{C}):\mathcal{C}\in\mathcal{C}_F, \emptyset\ne F\subset A, |F|<\omega\}$.
\end{lemma}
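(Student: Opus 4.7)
The plan is to mimic the proof of Lemma \ref{LIG0}, but now working with an arbitrary finite witness set rather than with all of $A$ at once. Fix a nonempty finite subset $F\subset A$ and a family $\mathcal{C}_0\in\mathcal{C}_F$, and set $G:=\cap\mathcal{C}_0$. Since $F$ is finite, the hypothesis of the lemma applies directly to $F$, so $G\ne\emptyset$. I want to show that every point $a_0\in A$ lies in $\cl_\theta(G)$; then, since $F$ and $\mathcal{C}_0$ were arbitrary, the desired inclusion follows.

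Suppose toward a contradiction that there is $a_0\in A$ with $a_0\notin\cl_\theta(G)$. Pick $W_{a_0}\in\mathcal{N}_{a_0}$ with $\overline{W}_{a_0}\cap G=\emptyset$. Now split into two cases. If $a_0\in F$, then (exactly as in the proof of Lemma \ref{LIG0}) replace the member $\overline{U}_{a_0}\in\mathcal{C}_0$ by $\overline{V}_{a_0}:=\overline{U}_{a_0}\cap\overline{W}_{a_0}$, where $V_{a_0}=U_{a_0}\cap W_{a_0}\in\mathcal{N}_{a_0}$; the resulting family is still in $\mathcal{C}_F$ but now has empty intersection, contradicting the hypothesis applied to $F$. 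If $a_0\notin F$, set $F':=F\cup\{a_0\}$, which is again a nonempty finite subset of $A$, and consider $\mathcal{C}_0\cup\{\overline{W}_{a_0}\}\in\mathcal{C}_{F'}$; its intersection is $G\cap\overline{W}_{a_0}=\emptyset$, contradicting the hypothesis applied to $F'$.

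Either way we reach a contradiction, so $A\subset\cl_\theta(\cap\mathcal{C}_0)$, and the conclusion follows by taking the intersection over all admissible $F$ and $\mathcal{C}_0$. There is no real obstacle; the only point that requires attention (and is the novelty relative to Lemma \ref{LIG0}) is the case split on whether the offending point $a_0$ already belongs to the current finite witness $F$ or has to be added to it, and in the second case one must invoke the hypothesis for the enlarged finite set $F'$ rather than for $F$ itself.
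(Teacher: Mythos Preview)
Your proof is correct and follows essentially the same approach as the paper's: both fix a finite $F\subset A$ and $\mathcal{C}_0\in\mathcal{C}_F$, assume some $a_0\in A$ misses $\cl_\theta(\cap\mathcal{C}_0)$, and derive a contradiction by adjoining or substituting $\overline{W}_{a_0}$ according to whether $a_0\notin F$ or $a_0\in F$. The only difference is cosmetic: you make the case split and the passage to the enlarged finite set $F'=F\cup\{a_0\}$ explicit, whereas the paper handles both cases in a single formula for $\mathcal{C}_1$ without naming $F'$.
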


\begin{proof}
Let $F$ be a nonempty subset of $A$, $\mathcal{C}_0\in\mathcal{C}_F$ and  $G=\cap\mathcal{C}_0$. Suppose that there exist $a_0\in A$ such that 
$a_0\notin \cl_\theta(G)$. Then there is $W_{a_0}\in \mathcal{N}_{a_0}$ 
such that $\overline{W}_{a_0}\cap G=\emptyset$. Let 
$\overline{V}_{a_0}:= \overline{W}_{a_0}$ if $a_0\notin F$ and  
$\overline{V}_{a_0}:= \overline{U}_{a_0} \cap \overline{W}_{a_0}$ if $a_0\in F$, where 
$\overline{U}_{a_0}\in \mathcal{C}_0$ and $U_{a_0}\in \mathcal{N}_{a_0}$. Then the family $\mathcal{C}_1:=\{\overline{V}_{a_0}\}\cup \{\overline{U}_a:U_a\in\mathcal{C}_0,a\in F\setminus\{a_0\}\}$ has  
the property that $\cap\mathcal{C}_1=\emptyset$, a 
contradiction. Therefore $A\subset \cl_\theta(\cap\mathcal{C})$ for 
every $\mathcal{C}\in\mathcal{C}_F$ and every nonempty finite subset $F$ of $A$, hence 
$A\subset\cap\{\cl_\theta(\cap\mathcal{C}):\mathcal{C}\in\mathcal{C}_F, \emptyset\ne F\subset A, |F|<\omega\}$.
\end{proof}

\begin{theorem}\label{TIG2}
Let $X$ be a topological space and $A$ be a nonempty subset of $X$. 
If $\cap\mathcal{C}\ne\emptyset$ for every 
$\mathcal{C}\in\mathcal{C}_F$ and every nonempty finite subset $F$ of $A$ then there exists a subset $M$ of $X$ such that $A\subset M$ and 
$M=\cap\{\cl_\theta(\cap\mathcal{C}):\mathcal{C}\in\mathcal{C}_F, \emptyset\ne F\subset M, |F|<\omega\}$.
\end{theorem}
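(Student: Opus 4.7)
The plan is to take $M$ to be a maximal finitely non-Urysohn extension of $A$ and then use its maximality to force the fixed-point identity. Concretely, let $\mathcal{F}$ be the collection of all subsets $B$ of $X$ with $A\subset B$ such that $\cap\mathcal{C}\ne\emptyset$ for every nonempty finite $F\subset B$ and every $\mathcal{C}\in\mathcal{C}_F$. By hypothesis $A\in\mathcal{F}$, and the defining property only involves finite subsets, so the union of any chain in $\mathcal{F}$ lies again in $\mathcal{F}$. Zorn's lemma therefore produces a maximal element $M\in\mathcal{F}$, which is the candidate.

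One inclusion is then immediate: since $M\in\mathcal{F}$, Lemma \ref{LIG1} (applied with $M$ in place of $A$) gives $M\subset\cap\{\cl_\theta(\cap\mathcal{C}):\mathcal{C}\in\mathcal{C}_F,\emptyset\ne F\subset M,|F|<\omega\}$. For the reverse inclusion, suppose $x$ lies in this intersection; I would form $M':=M\cup\{x\}$ and check that $M'\in\mathcal{F}$, whence maximality forces $x\in M$. To verify $M'\in\mathcal{F}$, take an arbitrary nonempty finite $F'\subset M'$ and $\mathcal{C}'\in\mathcal{C}_{F'}$. If $x\notin F'$, then $F'\subset M$ and the property is inherited. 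If $F'=\{x\}$, then $x$ lies in its single member $\overline{U}_x$, hence in $\cap\mathcal{C}'$. Otherwise $F'=F\cup\{x\}$ with $\emptyset\ne F\subset M$, and $\mathcal{C}'$ splits as $\{\overline{U}_x\}\cup\mathcal{C}$ for some $\mathcal{C}\in\mathcal{C}_F$; the hypothesis $x\in\cl_\theta(\cap\mathcal{C})$ yields $\overline{U}_x\cap(\cap\mathcal{C})\ne\emptyset$, which is precisely $\cap\mathcal{C}'\ne\emptyset$.

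The only substantive step is the reverse inclusion, and the content there is the split into the three cases above. The delicate one is the mixed case $F'=F\cup\{x\}$, where the membership $x\in\cl_\theta(\cap\mathcal{C})$ is invoked in exactly the right form: it produces, for each choice of $U_x\in\mathcal{N}_x$, a point of $\overline{U}_x\cap(\cap\mathcal{C})$. Once this is in place, maximality of $M$ closes the argument and the fixed-point equation $M=\cap\{\cl_\theta(\cap\mathcal{C}):\mathcal{C}\in\mathcal{C}_F,\emptyset\ne F\subset M,|F|<\omega\}$ holds, with $A\subset M$ by construction.
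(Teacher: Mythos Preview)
Your proof is correct. Both your argument and the paper's hinge on the same verification: if $x$ lies in the intersection $\cap\{\cl_\theta(\cap\mathcal{C}):\mathcal{C}\in\mathcal{C}_F,\ \emptyset\ne F\subset M,\ |F|<\omega\}$, then $M\cup\{x\}$ remains finitely non-Urysohn. The difference is in how the maximal set $M$ is produced. The paper builds $M$ by explicit transfinite recursion of length $|X|^+$: starting from $A_0=A$, at each successor stage it adjoins a single point of the intersection lying outside $A_\gamma$ (if any), and at limit stages it takes unions; termination is forced because the chain is strictly increasing inside $X$. You instead invoke Zorn's lemma on the poset of finitely non-Urysohn supersets of $A$, using the finitary nature of the defining condition to bound chains by their unions. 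Your route is shorter and avoids the ordinal bookkeeping; the paper's route is more explicit about the one-point extension step, which is precisely the content of your three-case analysis for $M'=M\cup\{x\}$.
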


\begin{proof}
Let $\alpha$ be an initial ordinal such that $\alpha=|X|^+$ and let $A$ 
satisfies the hypotheses of our claim. Then it follows from 
Lemma \ref{LIG1} that  
$A\subset\cap\{\cl_\theta(\cap\mathcal{C}):\mathcal{C}\in\mathcal{C}_F, \emptyset\ne F\subset A, |F|<\omega\}$. Suppose that there is 
$x_0\in \cap\{\cl_\theta(\cap\mathcal{C}):\mathcal{C}\in\mathcal{C}_F, \emptyset\ne F\subset A, |F|<\omega\}\setminus A$. 
Then $\overline{U}\cap(\cap\mathcal{C})\ne\emptyset$ for every 
$U\in \mathcal{N}_{x_0}$, every $\mathcal{C}\in\mathcal{C}_F$ and every nonempty finite subset $F$ of $A$. Then for the set $A_1:=A\cup\{x_0\}$ we have that if $F$ is a nonempty finite 
subset of $A_1$ and $\mathcal{C}\in\mathcal{C}_{F}$ then $\cap\mathcal{C}\ne\emptyset$.
Therefore, according to Lemma \ref{LIG1}, 
$A_1\subset\cap\{\cl_\theta(\cap\mathcal{C}):\mathcal{C}\in\mathcal{C}_F, \emptyset\ne F\subset A_1, |F|<\omega\}$. We continue this process for every 
$\beta<\alpha$ as follows. If $\beta=\gamma+1$ for some $\gamma<\alpha$ and $A_\gamma\subsetneq\cap\{\cl_\theta(\cap\mathcal{C}):\mathcal{C}\in\mathcal{C}_F, \emptyset\ne F\subset A_\gamma, |F|<\omega\}$ then we choose $x_\gamma\in \cap\{\cl_\theta(\cap\mathcal{C}):\mathcal{C}\in\mathcal{C}_F, \emptyset\ne F\subset A_\gamma, |F|<\omega\}\setminus A_\gamma$ and define $A_\beta:=A_\gamma\cup\{x_\gamma\}$. If $\beta$ is a limit ordinal then 
$A_\beta:=\cup\{A_\gamma:\gamma<\beta\}$. In that case it is clear that 
$\cap\mathcal{C}\ne\emptyset$ for every 
$\mathcal{C}\in\mathcal{C}_F$ and every nonempty finite subset $F$ of $A_\beta$ since every such $F$ is a subset of $A_\gamma$ for some $\gamma<\beta$. Therefore, according to Lemma \ref{LIG1}, we have 
$A_\beta\subset\cap\{\cl_\theta(\cap\mathcal{C}):\mathcal{C}\in\mathcal{C}_F, \emptyset\ne F\subset A_\beta, |F|<\omega\}$. If for some $\beta<\alpha$ we have  $A_\beta=\cap\{\cl_\theta(\cap\mathcal{C}):\mathcal{C}\in\mathcal{C}_F, \emptyset\ne F\subset A_\beta, |F|<\omega\}$ then we stop and take $M$ to be $A_\beta$. This process will eventually stop since for each $\gamma<\beta< \alpha$ we have $A_\gamma\subsetneq A_\beta\subseteq X$ and $\alpha > |X|$.
\end{proof}

\section{The non-Urysohn number of a space}

Motivated by the observations in the previous section we give the following definition.

\begin{definition}\label{DIG1}
A nonempty subset $A$ of a topological space $X$ is called 
\emph{finitely non-Urysohn} if for every nonempty finite subset $F$ of $A$ and 
every $\mathcal{C}\in\mathcal{C}_F$, $\cap\mathcal{C}\ne\emptyset$.
$A$ is called \emph{maximal finitely non-Urysohn subset of $X$} if $A$ is a 
finitely non-Urysohn subset of $X$ and if $B$ is a finitely non-Urysohn subset of $X$ such 
that $A\subset B$ then $A=B$.
\end{definition}

\begin{remark}\label{R2} {\rm (a)} Using Lemma \ref{LIG1} and Definition \ref{DIG1} one can easily verify that a nonempty subset $M$ of a topological space is maximal finitely non-Urysohn if and only if $M=\cap\{\cl_\theta(\cap\mathcal{C}):\mathcal{C}\in\mathcal{C}_F, \emptyset\ne F\subset M, |F|<\omega\}$.

{\rm (b)} It follows from Theorem \ref{TIG2} and Remark \ref{R2}(a) that every finitely non-Urysohn subset of a topological space is contained in a maximal one.

{\rm (c)} Using disjoint union of spaces as those constructed in Example \ref{E1} one can construct a Hausdorff topological space with (disjoint) maximal finitely non-Urysohn subsets with different cardinality. 

{\rm (d)} In a Urysohn space $X$ the only (maximal) finitely non-Urysohn subsets of $X$ are the singletons.
\end{remark}

Now we are ready to introduce the concept of a non-Urysohn number of a topological space $X$.

\begin{definition}\label{DIG2}
Let $X$ be a topological space. We define \emph{the non-Urysohn number 
$nu(X)$ of $X$} as follows: $nu(X):=1+\sup\{|M|:M$ is a (maximal) 
finitely non-Urysohn subset of $X\}$.
\end{definition}

\begin{remark}
It follows from Theorem \ref{TIG1} and Definition \ref{DIG2} 
that if $X$ is a topological space with a finite Urysohn number then 
$nu(X)=U(X)$. Also, $nu(X)\ge 2$ and $nu(X)\ge U(X)$ for every topological 
space $X$. For the space $X$ in Example \ref{E1}, $nu(X)=\alpha$ for 
every $\alpha$ while $U(X)=\alpha$ only if $\alpha<\omega$ and $U(X)=\omega$ if 
$\alpha\ge\omega$. Therefore there are even Hausdorff spaces $X$ for which  
$nu(X) >  U(X)$. 
\end{remark}

\section{On the cardinality of the $\theta$-closed hull}

In Theorem \ref{TIG3}, using the cardinal invariant non-Urysohn number of a space, we give an upper bound for $|\cl_\theta(A)|$ and $|[A]_\theta|$ of a subset $A$ in a topological space $X$. That theorem generalizes simultaneously all the results included in Theorem \ref{TT}. The proof of Theorem \ref{TIG3} follows proofs given in \cite{BelCam88}, \cite{AlaKoc00},  \cite{BonCamMatPan08}, \cite{BonCamMat11} or \cite{BonPan12}.

\begin{theorem}\label{TT}
Let $X$ be a space and $A\subset X$. 
\begin{itemize}
\item[(a)] If $X$ is Urysohn then 
$|[A]_\theta|\le |A|^{\chi(X)}$ \cite{BelCam88};
\item[(b)] If $X$ is Urysohn then 
$|\cl_\theta(A)|\le |A|^{\kappa(X)}$ \cite{AlaKoc00};
\item[(c)] If $U(X)$ is finite then 
$|[A]_\theta|\le |A|^{\chi(X)}$ \cite{BonCamMatPan08}, \cite{BonCamMat11};
\item[(d)] If $U(X)$ is finite then
$|[A]_\theta|\le |A|^{\kappa(X)}$ \cite{BonPan12}.
\end{itemize}
\end{theorem}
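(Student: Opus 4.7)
My plan is to establish all four parts via a unified transfinite closing-off construction. Let $\kappa$ denote $\chi(X)$ in parts (a), (c) and $\kappa(X)$ in parts (b), (d), and set $\lambda := |A|^{\kappa}$. For each $x \in X$, fix a family $\mathcal{V}_x$ of cardinality at most $\kappa$ consisting of open neighborhoods (in (a), (c)) or closed neighborhoods (in (b), (d)), as in the definitions of the corresponding invariants. Build an increasing chain $\{A_\alpha : \alpha < \kappa^+\}$ by setting $A_0 := A$, taking unions at limit stages, and at each successor stage $\alpha+1$ adjoining to $A_\alpha$ witnesses of the following sort: after fixing an enumeration $\mathcal{V}_x = \{V_\xi^x : \xi < \kappa\}$ for each candidate point $x$, the ``trace'' of $x$ is the function $\xi \mapsto f_x(\xi) \in \overline{V}_\xi^x \cap A_\alpha$; then adjoin all points of $\cl_\theta(A_\alpha)$ realizing each such trace.

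The crucial step is the multiplicity count: at every stage the number of newly added points is bounded by $|A_\alpha|^{\kappa} \cdot m$, where $m$ is the maximum number of distinct points sharing a common trace. In the Urysohn cases (a), (b) we have $m=1$: two distinct points $x \ne y$ admit disjoint closed neighborhoods $\overline{U}, \overline{W}$; shrinking to members $\overline{V}_\xi^x \subseteq \overline{U}$ and $\overline{V}_\eta^y \subseteq \overline{W}$ and coordinating the traces forces at least one witness of $x$ to lie in a closed set disjoint from the corresponding witness of $y$, contradicting trace coincidence. In the finite-Urysohn-number cases (c), (d) we have $m \le U(X) - 1$ by essentially the mechanism of Lemma \ref{LIG0}: if $U(X) = n$ distinct points shared a trace, one would obtain a set $F$ of $n$ points with $\cap \mathcal{C} \ne \emptyset$ for every $\mathcal{C} \in \mathcal{C}_F$, contradicting the definition of $U(X)$. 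In all cases the multiplicity is absorbed into $\lambda$, so $|A_\alpha| \le \lambda$ persists through the induction.

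Setting $A_\infty := \bigcup_{\alpha < \kappa^+} A_\alpha$ gives $|A_\infty| \le \lambda$. Regularity of $\kappa^+$ ensures $A_\infty$ is $\theta$-closed: any $\theta$-closure point of $A_\infty$ uses only a $\kappa$-sized family of closed neighborhoods, each meeting $A_\infty$ already at stage $\alpha_\xi < \kappa^+$, and the supremum of the $\alpha_\xi$ is also less than $\kappa^+$; the point was added at that supremum stage. Thus $A_\infty$ contains $\cl_\theta(A)$ in (b) and $[A]_\theta$ in (a), (c), (d), yielding the required bound.

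The step I expect to be most delicate is the multiplicity-one argument in the Urysohn case: coordinating the enumerations $\{V_\xi^x : \xi < \kappa\}$ across distinct $x$ so that coincidence of traces produces a coordinate-wise contradiction requires some care in how the trace is encoded. A secondary technical issue is that in parts (b) and (d) only closed neighborhoods are available in $\mathcal{V}_x$, rather than a local base of open ones; the defining property of $\kappa(X)$ --- that every closed neighborhood of $x$ contains a member of $\mathcal{V}_x$ --- is exactly what certifies that a trace of an external point supplies a witness in $\overline{V}_\xi^x \cap A_\alpha$ for every $\xi$, and hence is precisely what allows $\kappa(X)$ to replace $\chi(X)$ in (b), (d).
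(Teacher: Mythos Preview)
Your overall closing-off strategy matches the paper's: the paper does not prove Theorem~\ref{TT} separately but derives all four parts from the more general Theorem~\ref{TIG3}, whose proof iterates $\cl_\theta$ transfinitely and bounds each step by a trace-counting argument. However, your encoding of the trace as the \emph{function} $f_x:\xi\mapsto f_x(\xi)\in\overline{V_\xi^x}\cap A_\alpha$ does not support the multiplicity bound, and this gap cannot be repaired by ``coordinating enumerations.'' Suppose $f_x=f_y=f$ for distinct $x,y$. Choosing disjoint closed neighborhoods $\overline{U},\overline{W}$ of $x,y$ and indices $\xi,\eta$ with $\overline{V_\xi^x}\subseteq\overline{U}$, $\overline{V_\eta^y}\subseteq\overline{W}$ gives only $f(\xi)\in\overline{U}$ and $f(\eta)\in\overline{W}$; since $\xi$ and $\eta$ are in general different, no common point is forced into $\overline{U}\cap\overline{W}$. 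The same obstruction arises in the finite-$U(X)$ case: from $f_{x_1}=\dots=f_{x_n}=f$ you obtain $f(\xi_i)\in\overline{U_{x_i}}$ for possibly different indices $\xi_i$, which says nothing about $\bigcap_i\overline{U_{x_i}}$. The difficulty is intrinsic: the index $\xi$ has no meaning independent of the point $x$, so equality of traces at coordinate $\xi$ carries no geometric information.

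The missing idea, and the device the paper uses, is to record not the function $f_x$ but the \emph{centered family} $\Gamma_x:=\{V\cap A_x:V\in\mathcal{V}_x\}$, where $A_x:=\{f_x(\xi):\xi<\kappa\}$. There are still at most $(|A_\alpha|^{\kappa})^{\kappa}=|A_\alpha|^{\kappa}$ such families, since each lies in $[[A_\alpha]^{\le\kappa}]^{\le\kappa}$. The family $\Gamma_x$ is centered because any finite intersection $V_1\cap\dots\cap V_n$ of members of $\mathcal{V}_x$ is again a closed neighborhood of $x$ and hence contains some $V\in\mathcal{V}_x$, so $a_{x,V}\in\bigcap_i(V_i\cap A_x)$. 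Now if a finite set $F$ of points shares the same $\Gamma$ and $\mathcal{C}=\{\overline{U_x}:x\in F\}\in\mathcal{C}_F$, then each $\overline{U_x}$ contains some $V\in\mathcal{V}_x$, whence $V\cap A_x\in\Gamma$; since $\Gamma$ is centered, $\bigcap_{x\in F}\overline{U_x}\supseteq\bigcap_{x\in F}(V\cap A_x)\ne\emptyset$. This immediately gives multiplicity $1$ in the Urysohn case and multiplicity at most $U(X)-1$ when $U(X)$ is finite. With this correction the rest of your argument (the transfinite chain of length $\kappa^+$, the regularity argument for $\theta$-closedness of $A_\infty$, and the observation that for part~(b) a single application suffices) goes through and coincides with the paper's proof of Theorem~\ref{TIG3}.
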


\begin{theorem}\label{TIG3}
Let $A$ be a subset of a topological space $X$. Then 
$|\cl_\theta(A)|\le |A|^{\kappa(X)}\cdot nu(X)$ and 
$|[A]_\theta|\le (|A|\cdot nu(X))^{\kappa(X)}$.
\end{theorem}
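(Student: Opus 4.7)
Write $\kappa := \kappa(X)$ and $\nu := nu(X)$ (assumed infinite in the interesting case), and for each $x \in X$ fix a family $\mathcal{V}_x = \{V_x^\alpha : \alpha < \kappa\}$ satisfying the defining property of $\kappa(X)$ (with possible repetitions in the enumeration). The plan is to prove the first inequality by a partition-by-type argument, and then derive the second by iterating $\cl_\theta$ transfinitely for $\kappa^+$ steps.

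For each $y \in \cl_\theta(A)$ and each finite tuple $\vec\alpha = (\alpha_1, \ldots, \alpha_m) \in \kappa^{<\omega}$, the intersection $V_y^{\alpha_1} \cap \cdots \cap V_y^{\alpha_m}$ is a closed neighborhood of $y$ and therefore meets $A$; choose $\phi_y(\vec\alpha)$ to be any point of $A$ in this intersection. This yields a \emph{type function} $\phi_y : \kappa^{<\omega} \to A$, and since $|\kappa^{<\omega}| = \kappa$ there are at most $|A|^\kappa$ such functions. Setting $T_\phi := \{y \in \cl_\theta(A) : \phi_y = \phi\}$, the heart of the proof is to show that each $T_\phi$ is finitely non-Urysohn, so that $|T_\phi| < \nu$ and hence $|\cl_\theta(A)| \le |A|^\kappa \cdot \nu$. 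Given $y_1, \ldots, y_n \in T_\phi$ and closed neighborhoods $\overline{U_i}$ of $y_i$, use the $\kappa(X)$ property to pick $\alpha_i < \kappa$ with $V_{y_i}^{\alpha_i} \subseteq \overline{U_i}$, and set $\vec\alpha := (\alpha_1, \ldots, \alpha_n)$. Then for every $j$,
\[
\phi(\vec\alpha) = \phi_{y_j}(\vec\alpha) \in V_{y_j}^{\alpha_1} \cap \cdots \cap V_{y_j}^{\alpha_n} \subseteq V_{y_j}^{\alpha_j} \subseteq \overline{U_j},
\]
so $\phi(\vec\alpha) \in \bigcap_j \overline{U_j}$. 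The essential subtlety, and main obstacle, lies exactly here: a single-coordinate type $\kappa \to A$ would fail because the enumerations of $\mathcal{V}_{y_i}$ at different $y_i$'s are uncoordinated, whereas finite-tuple types let one ``collective'' index $\vec\alpha$ absorb one coordinate per $y_i$ and deliver the common witness $\phi(\vec\alpha)$.

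For the second inequality, iterate: $A_0 := A$, $A_{\alpha+1} := \cl_\theta(A_\alpha)$, $A_\beta := \bigcup_{\alpha<\beta} A_\alpha$ at limit $\beta$, and $B := \bigcup_{\alpha<\kappa^+} A_\alpha$. Then $B$ is $\theta$-closed, so $[A]_\theta \subseteq B$: given $x \in \cl_\theta(B)$, for each $V \in \mathcal{V}_x$ pick $b_V \in V \cap B$ with $\gamma_V < \kappa^+$ satisfying $b_V \in A_{\gamma_V}$; since $|\mathcal{V}_x| \le \kappa < \mathrm{cf}(\kappa^+)$, the ordinal $\gamma := \sup_V \gamma_V$ lies below $\kappa^+$, and the $\kappa$-basis property of $\mathcal{V}_x$ then forces $x \in \cl_\theta(A_\gamma) = A_{\gamma+1} \subseteq B$. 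Letting $\mu := (|A| \cdot \nu)^\kappa$ and using the identity $\mu^\kappa = \mu$, a straightforward transfinite induction based on the first inequality yields $|A_\alpha| \le \mu$ for every $\alpha < \kappa^+$, so $|[A]_\theta| \le |B| \le \kappa^+ \cdot \mu = \mu$.
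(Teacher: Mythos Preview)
Your argument is correct and follows essentially the same plan as the paper: a type map on $\cl_\theta(A)$ whose fibers are finitely non-Urysohn (hence of size at most $nu(X)$), followed by a $\kappa^+$-length iteration of $\cl_\theta$ to reach $[A]_\theta$. The paper encodes the type of $x$ as the centered family $\Gamma_x=\{V\cap A_x:V\in\mathcal{V}_x\}$, where $A_x$ collects one witness $a_{x,V}\in V\cap A$ per single $V$, and then uses centeredness of $\Gamma_x$ (which in turn relies on the $\kappa(X)$-property that finite intersections of members of $\mathcal{V}_x$ contain a member) to get the contradiction; your witness function on finite tuples of indices packages the same information more directly and makes the fiber verification a one-line computation. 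One small slip: since $nu(X)=1+\sup\{|M|:M\text{ is finitely non-Urysohn}\}$, when $nu(X)$ is infinite a finitely non-Urysohn set may have cardinality exactly $nu(X)$, so you should write $|T_\phi|\le\nu$ rather than $|T_\phi|<\nu$; the final inequality is unaffected.
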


\begin{proof}
Let $\kappa(X) = m$, $nu(X) = u$, and $|A| = \tau$. For each $x\in X$ 
let $\mathcal{V}_x$ be a collection of closed neighborhoods of $x$ with  
$|\mathcal{V}_x|\le m$ and such that if $W$ is
a closed neighborhood of $x$ then $W$ contains a member of 
$\mathcal{V}_x$. For every $x\in \cl_\theta(A)$ and every 
$V\in \mathcal{V}_x$, fix a point $a_{x,V}\in V\cap A$, and let 
$A_x := \{a_{x,V}:V\in \mathcal{V}_x\}$. Let also
$\Gamma_x := \{V\cap A_x : V\in \mathcal{V}_x\}$. Then $\Gamma_x$ is a 
centered family (the intersection of any finitely many elements of 
$\Gamma_x$ is nonempty). It is not difficult to see that there are 
at most $\tau^m$ such centered families. Indeed $A_x\in [A]^{\le m}$ 
and $V\cap A_x \in [A]^{\le m}$, for every $V \in \mathcal{V}_x$. 
Since each centered family $\Gamma_x$ is a subset of $[A]^{\le m}$ and  
$|\Gamma_x|\le m$, the cardinality of the set of all such families is at 
most $(|A|^m)^m=|A|^m=\tau^m$.

We claim that the mapping $x\rightarrow \Gamma_x$ is $(\le u)$-to-one. Assume the contrary. Then there is a subset $K \subset \cl_\theta(A)$ such that $|K| = u^+$ and 
every $x\in K$ corresponds to the same centered family $\Gamma$. Since 
$nu(X)=u$, there exists 
a nonempty finite subset $F$ of $K$ and $\mathcal{C}\in \mathcal{C}_F$ 
such that $\cap{\mathcal{C}}=\emptyset$. Then for every $x \in F$ and 
$U_x\in \mathcal{C}$ we have $U_x \cap A_x \in \Gamma$; hence 
$\Gamma$ is not centered, a contradiction. 

Therefore the mapping $x \rightarrow \Gamma_x$ from $\cl_\theta(A)$ to 
$[[A]^{\le m}]^{\le m}$ is $(\le u)$-to-one, and thus 
\begin{equation}\label{Eq1}
|\cl_\theta(A)|\le u\cdot (\tau^m)^m = u\cdot \tau^m
\end{equation}
(Note that the proof that the 
mapping $x\rightarrow \Gamma_x$ is $(\le u)$-to-one does not 
depend upon the cardinality of the set $A$.)

It is not difficult to see (e.g., as in the proof of Theorem 1 in 
\cite{BelCam88}) that if we set $A_0 = A$ and 
$A_\alpha = \cl_\theta(\bigcup_{\beta<\alpha}A_\beta)$
for all $0<\alpha\le m^+$, then $[A]_\theta = A_{m^+}$. 
Let $\kappa=u\cdot \tau$. It follows from (\ref{Eq1}) that 
$|A_2|\le u\cdot (u\cdot \tau^m)^m=u^m\cdot\tau^m=(u\cdot \tau)^m=\kappa^m$.

To finish the proof we will show that if $\alpha$ is such that 
$2\le\alpha\le m^+$ then $|A_\alpha|\le \kappa^m$, and therefore 
$|[A]_\theta|\le \kappa^m$.

Suppose that $\alpha_0\le m^+$ is the smallest ordinal such that 
$|A_{\alpha_0}|> \kappa^m$. Then we have $|A_\beta|\le \kappa^m$
for each $\beta<\alpha_0$ and therefore 
$|\cup_{\beta<\alpha_0}A_\beta|\le\kappa^m\cdot m^+=\kappa^m$.
Now using (\ref{Eq1}) again we get 
$|A_{\alpha_0}|\le u\cdot(\kappa^m)^m =\kappa^m$, a contradiction.
\end{proof}

\begin{corollary}
If $X$ is a topological space then 
$|X|\le (d_\theta(X))^{\kappa(X)}nu(X)$.
\end{corollary}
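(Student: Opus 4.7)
The plan is to deduce this statement as an immediate consequence of the first inequality in Theorem \ref{TIG3}. By the definition of $d_\theta(X)$, there exists a subset $A \subset X$ with $|A| = d_\theta(X)$ and $\cl_\theta(A) = X$; such an $A$ exists by the minimum in the definition. The strategy is then to apply the bound $|\cl_\theta(A)| \le |A|^{\kappa(X)} \cdot nu(X)$ to this particular $A$.

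Concretely, I would first fix a $\theta$-dense $A \subset X$ realizing $|A| = d_\theta(X)$. Since $\cl_\theta(A) = X$, we have $|X| = |\cl_\theta(A)|$. Invoking Theorem \ref{TIG3} directly yields
\[
|X| \;=\; |\cl_\theta(A)| \;\le\; |A|^{\kappa(X)} \cdot nu(X) \;=\; (d_\theta(X))^{\kappa(X)} \cdot nu(X),
\]
which is the required inequality.

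There is essentially no obstacle to overcome; the corollary is a verbatim substitution of a $\theta$-dense witness into the first conclusion of Theorem \ref{TIG3}. The only trivial caveats worth noting are the degenerate cases, which cause no trouble: if $X = \emptyset$ then $d_\theta(X) = 0$ and the inequality is vacuous, while in all other cases $d_\theta(X) \ge 1$ and the cardinal arithmetic $|A|^{\kappa(X)} = (d_\theta(X))^{\kappa(X)}$ is immediate.
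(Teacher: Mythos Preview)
Your proposal is correct and is precisely the intended derivation: the paper states the corollary without proof immediately after Theorem~\ref{TIG3}, and your substitution of a $\theta$-dense witness $A$ with $|A|=d_\theta(X)$ into the first inequality of that theorem is exactly how it follows.
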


\begin{remark}\label{R3}
Example \ref{E1} shows that the inequality $|\cl_\theta(A)|\le |A|^{\kappa(X)}\cdot nu(X)$ 
in Theorem \ref{TIG3} is exact. 
To see that, let $\alpha\ge\mathfrak{c}$ and take 
$A=\{(n,0):n\in\mathbb{N}\}\subset X$ and 
$M=\{\beta:\beta<\alpha\}\subset X$. Then $\cl_\theta(A)=A\cup M$ and 
therefore $|\cl_\theta(A)|=\alpha\le |A|^{\kappa(X)}\cdot nu(X)=\omega^\omega\cdot\alpha=\alpha$.

To see that the inequality  
$|[A]_\theta|\le (|A|\cdot nu(X))^{\kappa(X)}$ in Theorem \ref{TIG3}
is also exact, one can construct a Hausdorff (non-Urysohn) space $Y$ 
and a set $A\subset Y$ with $|[A]_\theta|=(|A|\cdot nu(Y))^{\kappa(Y)}$
as follows. Take the space $X_1:=X$ from Example \ref{E1} and the set 
$M=\{\beta:\beta<\alpha\}\subset X_1$. Represent the set $M$ as a disjoint 
union $\cup_{\beta<\alpha} M_\beta$ of countable infinite subsets of $M$. 
Take $\alpha$ many disjoint copies $\{X_\beta:\beta<\alpha\}$ of the space 
$X$ and for each $\beta<\alpha$ identify the points of $A_\beta$ with the 
points $\{(n,0):n<\omega\}\subset X_\beta$. Call the resulting space 
$X_2$. Now $X_1\subset X_2$ and in $X_2$ we have $\alpha$ many new copies 
of the set $M$. For each such set repeat the previous procedure to obtain 
the space $X_3$ and continue this procedure for each $n<\omega$. Call the 
resulting space $Y$. It is not difficult to see that $U(Y)=\omega$, 
$\chi(Y)=\kappa(Y)=\omega$, $nu(Y)=\alpha$, and if $A$ is the subset 
$\{(n,0):n\in\mathbb{N}\}$ of $X_1$ then 
$|[A]_\theta|=\alpha^\omega=(\omega\cdot\alpha)^\omega=(|A|\cdot nu(Y))^{\kappa(Y)}$.
Notice that if $\alpha>\mathfrak{c}$ is chosen to be a cardinal with a countable cofinality then $|[A]_\theta|=\alpha^\omega > \alpha=\omega^\omega\cdot\alpha= |A|^{\kappa(Y)}\cdot nu(Y)$ and therefore the right-hand side of the second inequality cannot be replaced by the right-hand side of the first inequality.
\end{remark}

\section{Some cardinal inequalities involving the non-Urysohn number}

We recall some definitions.

\begin{definition}[{\cite{WilDis84}, \cite{Hod06}}]
The \emph{almost Lindel\"{o}f degree of a space $X$ with respect to closed 
sets} is $aL_c(X):=\sup\{aL(F,X):F$ is a closed subset of $X\}$, where 
$aL(F,X)$ is the minimal cardinal number $\tau$ such that for every open 
(in $X$) cover $\mathcal{U}$ of $F$ there is a subfamily 
$\mathcal{U}_0\subset \mathcal{U}$ such that 
$|\mathcal{U}_0|\le \tau$ and 
$F\subset \cup\{\overline{U}:U\in\mathcal{U}_0\}$.
$aL(X,X)$ is called \emph{almost Lindel\"{o}f degree of $X$} and is 
denoted by $aL(X)$.
\end{definition}

\begin{remark}\label{R4}
The cardinal function $aL_c(X)$ was introduced in \cite{WilDis84} under 
the name Almost Lindel\"{o}f Degree and was denoted by $aL(X)$. Here we 
follow the notation and terminology used in \cite{Hod06} and suggested in 
\cite{BelCam88}. 
\end{remark}

\begin{definition}[\cite{Ala93}] 
The cardinal function $wL_c(X)$ is the smallest cardinal $\tau$
such that if $A$ is a closed subset of $X$ and $\mathcal{U}$ is an open 
(in $X$) cover of $A$, then there exists $\mathcal{V}\subset \mathcal{U}$ 
with $|\mathcal{V}|\le \tau$ such that 
$A\subset \overline{\cup\mathcal{V}}$.
\end{definition}

\begin{definition}[\cite{Arh95}] 
The cardinal function $sL(X)$ is the smallest cardinal $\tau$
such that if $A\subset X$ and $\mathcal{U}$ is an open (in $X$) cover of 
$\overline{A}$, then there exists $\mathcal{V}\subset \mathcal{U}$ with 
$|\mathcal{V}|\le \tau$ such that $A\subset \overline{\cup\mathcal{V}}$.
\end{definition}

\begin{definition}[\cite{AlaKoc00}] 
The cardinal function $sL_\theta(X)$ is the smallest cardinal $\tau$
such that if $A\subset X$ and $\mathcal{U}$ is an open (in $X$) cover of 
$\cl_\theta(A)$, then there exists $\mathcal{V}\subset \mathcal{U}$ with 
$|\mathcal{V}|\le \tau$ such that $A\subset \overline{\cup\mathcal{V}}$.
\end{definition}

Clearly $sL_\theta(X)\le sL(X)\le wL_c(X)\le L(X)$ and 
$aL(X)\le aL_c(X)\le L(X)$, where $L(X)$ is the Lindel\"{o}f degree of 
$X$. In \cite{AlaKoc00} an example of a Urysohn space $X$ is constructed 
such that $sL_\theta(X)<sL(X)$. For examples of Urysohn spaces such that 
$aL(X)<wL_c(X)$ or $wL_c(X)<aL(X)$ see \cite{Hod06} and for an example 
of a Urysohn space for which $aL(X)<aL_c(X)<L(X)$ see \cite{WilDis84} 
or \cite{Hod06}.

Here are some cardinal inequalities that involve the cardinal functions 
defined above. For more related results see the survey paper \cite{Hod06}.

\begin{theorem}\label{T}
\noindent
\begin{itemize}
\item[(a)] If $X$ is a Hausdorff space, then $|X|\le 2^{\chi(X)L(X)}$
\cite{Arh69}.
\item[(b)] If $X$ is a Hausdorff space, then $|X|\le 2^{\chi(X)aL_c(X)}$
\cite{WilDis84}.
\item[(c)] If $X$ is a Urysohn space, then $|X|\le 2^{\chi(X)aL(X)}$
\cite{BelCam88}.
\item[(d)] If $X$ is a Hausdorff space with $U(X)<\omega$, 
then $|X|\le 2^{\chi(X)aL(X)}$ \cite{BonCamMat11}.
\item[(e)] If $X$ is a Urysohn space, then 
$|X|\le 2^{\chi(X)wL_c(X)}$ \cite{Ala93}.
\item[(f)] If $X$ is a topological space with $U(X)<\omega$, then 
$|X|\le 2^{\chi(X)wL_c(X)}$ \cite{BonCamMat11}.
\item[(g)] If $X$ is a Hausdorff space, then 
$|X|\le 2^{\chi(X)sL(X)}$ \cite{Arh95}.
\item[(h)] If $X$ is a Urysohn space, then 
$|X|\le 2^{\kappa(X)sL_\theta(X)}$ \cite{AlaKoc00}.
\item[(i)] If $X$ is a topological space with $U(X)<\omega$, then 
$|X|\le 2^{\kappa(X)sL_\theta(X)}$ \cite{BonPan12}.
\end{itemize}
\end{theorem}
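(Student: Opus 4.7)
The plan is to treat all nine parts of Theorem \ref{T} via a uniform closing-off argument of Hajnal--Juhász--Arhangel'skii type. Let $\tau$ denote the cardinal appearing in the exponent on the right-hand side (so $\tau=\chi(X)L(X)$ in (a), $\tau=\chi(X)aL_c(X)$ in (b), and so on down to $\tau=\kappa(X)sL_\theta(X)$ in (h) and (i)). I would first fix a local structure at every point $x\in X$: an open base $\mathcal{B}_x\subseteq\mathcal{N}_x$ of size $\le\chi(X)$ for parts (a)--(g), and a family of closed neighborhoods $\mathcal{V}_x$ of size $\le\kappa(X)$ as in Definition \ref{D1} for parts (b), (h), (i).

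Next, I would construct by transfinite recursion an increasing chain $H_0\subseteq H_1\subseteq\cdots\subseteq H_\alpha\subseteq\cdots$ for $\alpha<\tau^+$, maintaining $|H_\alpha|\le 2^\tau$ at every stage. At limit $\alpha$, take $H_\alpha=\bigcup_{\beta<\alpha}H_\beta$. At successor $\alpha+1$, for every subset $S\subseteq H_\alpha$ of size $\le\tau$ and every assignment $s\mapsto B_s$ with $B_s\in\mathcal{B}_s$ (resp.\ $\mathcal{V}_s$) such that the family $\{\overline{B}_s:s\in S\}$ (or $\{B_s\}$ itself in the $L(X)$ case) fails to cover the relevant set, pick one witness from the complement and add it. A routine counting gives $|H_{\alpha+1}|\le(2^\tau)^\tau\cdot 2^\tau=2^\tau$, so $H:=\bigcup_{\alpha<\tau^+}H_\alpha$ has $|H|\le 2^\tau$.

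The crux of each part is to show $H=X$. Assuming $p\in X\setminus H$, the separation hypothesis supplies, for each $h\in H$, a basic neighborhood whose closure misses $p$: for Hausdorff parts (a), (b), (g) use disjoint open neighborhoods and plain-cover reasoning; for Urysohn parts (c), (e), (h) use that $h$ and $p$ have disjoint closed neighborhoods; and for the finite-$U(X)$ parts (d), (f), (i) invoke Theorem \ref{TIG1} so that no subset of $H$ of size $U(X)$ can force all of its closed-neighborhood families to catch $p$. The covering invariant then extracts a subfamily of size $\le\tau$ whose (closures of) neighborhoods cover $H$, $\overline{H}$, or $\cl_\theta(H)$ as dictated by $L$, $aL$, $aL_c$, $wL_c$, $sL$ or $sL_\theta$. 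This subfamily already lived inside some $H_\alpha$, so the closing-off step at stage $\alpha+1$ should have added a witness separating $p$, a contradiction.

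The main obstacle will be matching the cover-extraction step to the separation hypothesis: for $aL$ one needs unions of closures to cover, which is why Urysohn or finite $U(X)$ enters; for $sL_\theta$ one must cover the $\theta$-closure of $H$ rather than $\overline{H}$, which is precisely what allows the weaker cardinal $\kappa(X)$ to replace $\chi(X)$ in (h) and (i); and for the $U(X)<\omega$ cases one must partition $H$ into finitely many Urysohn-like blocks at each stage so that the chosen closures simultaneously miss $p$. Handling these variations within a single framework is primarily bookkeeping, but verifying that the extracted subfamily truly witnesses $p\in H_{\alpha+1}$ requires care about which closure operator (ordinary or $\theta$) the invariant implicitly references.
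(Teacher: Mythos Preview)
The paper does not prove Theorem~\ref{T} at all: it is a survey statement collecting nine previously published inequalities, each attributed to its original source, and serves only as background for the paper's own results (Theorems~\ref{TIG3}, \ref{TIG4}, and the $aL$ analogue). There is therefore no ``paper's own proof'' to compare your proposal against.

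That said, your outline is the right \emph{shape}---these inequalities are indeed all proved by Hajnal--Juh\'asz--Arhangel'skii closing-off---but as written it is a plan rather than a proof, and it has concrete soft spots. First, two label slips: Definition~\ref{D1} in this paper is the definition of $U(X)$, not of $\kappa(X)$; and part~(b) should not be on your list of $\kappa(X)$ cases. Second, for parts~(h) and~(i) the recursion must take $\theta$-closures at each stage, and to keep $|H_\alpha|\le 2^\tau$ you need the bound $|\cl_\theta(A)|\le|A|^{\kappa(X)}$ (Theorem~\ref{TT}(b),(d) or Theorem~\ref{TIG3}); you never invoke this, yet it is the whole reason $\kappa(X)$ can replace $\chi(X)$. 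Third, your handling of the finite-$U(X)$ cases is off: one does not ``partition $H$ into finitely many Urysohn-like blocks.'' The actual mechanism is that one builds $H$ to be $\theta$-closed (again using the $\theta$-closure cardinality bound, which holds when $U(X)<\omega$), so that for $p\notin H$ there is a closed neighborhood of $p$ disjoint from $H$; this gives, for every $h\in H$, an open neighborhood whose closure misses $p$, exactly as in the Urysohn case. Theorem~\ref{TIG1} is a structural characterization and is not what drives the argument. Finally, in the $sL_\theta$ step you must verify that the extracted subfamily's closures actually cover $H$ (not merely $\overline{\cup\mathcal{V}}\supseteq H$ applied to a cover of $\cl_\theta(H)$), and your sketch does not distinguish these carefully.
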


Recently in \cite{BonPan12}, after proving the inequality given in Theorem 
\ref{T}(i), the authors asked the following question. 

\begin{question}[{\cite[Question 11]{BonPan12}}]
Can one conclude that the inequality 
$$|X|\le U(X)^{\kappa(X)sL_\theta(X)}$$ is
true for every Hausdorff space $X$?
\end{question}

We show below that the answer of the above question is in the affirmative 
if the Urysohn number $U(X)$ is replaced by the non-Urysohn number 
$nu(X)$.

\begin{theorem}\label{TIG4}
For every topological space $X$, $|X|\le nu(X)^{\kappa(X)sL_\theta(X)}$.
\end{theorem}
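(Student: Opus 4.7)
The plan is to run a closing-off argument of the Alas--Kocinac / Bonanzinga--Pansera type, with $nu(X)$ entering through the $\theta$-closure estimate of Theorem~\ref{TIG3}. Set $u=nu(X)$, $\kappa=\kappa(X)$, $s=sL_\theta(X)$, and $\tau=u^{\kappa s}$. For each $x\in X$, fix a family $\mathcal{V}_x$ of closed neighborhoods of $x$ with $|\mathcal{V}_x|\le\kappa$ as in the definition of $\kappa(X)$.

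I would construct by transfinite recursion a $\subseteq$-increasing chain $\{H_\alpha:\alpha<(\kappa s)^+\}$ of subsets of $X$ with $|H_\alpha|\le\tau$, starting from a singleton and taking unions at limits. At a successor stage $\alpha+1$, adjoin to $H_\alpha$ two kinds of points: (a) the $\theta$-closure $\cl_\theta(H_\alpha)$, which by inequality (1) of Theorem~\ref{TIG3} has cardinality at most $|H_\alpha|^\kappa\cdot u\le\tau$; and (b) for every $A\in[H_\alpha]^{\le s}$ and every selection $\varphi\in\prod_{x\in A}\mathcal{V}_x$ with $X\setminus\overline{\bigcup_{x\in A}\varphi(x)}\ne\emptyset$, a chosen witness point in $X\setminus\overline{\bigcup_{x\in A}\varphi(x)}$. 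A routine count of the pairs $(A,\varphi)$ keeps $|H_{\alpha+1}|\le\tau$, and hence $|H|\le\tau$ for $H:=\bigcup_{\alpha<(\kappa s)^+}H_\alpha$. Using $|\mathcal{V}_y|\le\kappa$ for every $y\in X$ together with the regularity of $(\kappa s)^+$, one checks that $H$ is $\theta$-closed: if $y\in\cl_\theta(H)$, then each $V\in\mathcal{V}_y$ meets $H$ at some $H_{\alpha_V}$, and the supremum of the $\alpha_V$ lies below $(\kappa s)^+$.

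To finish, I would show $H=X$. Suppose for contradiction that $y\in X\setminus H=X\setminus\cl_\theta(H)$ and fix $U_0\in\mathcal{N}_y$ with $\overline{U_0}\cap H=\emptyset$. For each $x\in H$, since $x\notin\overline{U_0}$, there is $W_x\in\mathcal{N}_x$ with $\overline{W_x}\subseteq X\setminus U_0$, and so some $V_x\in\mathcal{V}_x$ satisfies $V_x\subseteq X\setminus U_0$. The family $\{\Int(V_x):x\in H\}$ is an open cover of $\cl_\theta(H)=H$, so by $sL_\theta(X)\le s$ there is $A\subseteq H$ with $|A|\le s$ and $H\subseteq\overline{\bigcup_{x\in A}V_x}$. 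Because $X\setminus U_0$ is closed, $\overline{\bigcup_{x\in A}V_x}\subseteq X\setminus U_0$, whence $y\in U_0\subseteq X\setminus\overline{\bigcup_{x\in A}V_x}\ne\emptyset$. Hence at the stage where $A$ first appeared, step (b) placed a witness $p\in H$ in $X\setminus\overline{\bigcup_{x\in A}V_x}$; but then $p\in H\subseteq\overline{\bigcup_{x\in A}V_x}$ and $p\notin\overline{\bigcup_{x\in A}V_x}$, the desired contradiction.

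The non-Urysohn number $u$ enters only through step (a), via Theorem~\ref{TIG3}(1); this is what drives the bound $\tau=u^{\kappa s}$, and is the main novelty compared to the Urysohn case. The main subtlety, and the principal obstacle to a naive closing-off, is the precise formulation of step (b): it must be phrased with the closure $\overline{\bigcup_{x\in A}\varphi(x)}$ rather than with $\bigcup_{x\in A}\varphi(x)$, so that the $sL_\theta$-derived inclusion $H\subseteq\overline{\bigcup V_x}$ genuinely collides with the chosen witness.
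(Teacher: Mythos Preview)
Your proposal is correct and follows essentially the same closing-off argument as the paper: build an increasing chain of length $(\kappa s)^+$, at each step adjoin the $\theta$-closure (bounded via Theorem~\ref{TIG3}) together with witnesses outside $\overline{\cup\mathcal U}$ for every small selection of neighborhoods, then show the union is $\theta$-closed and exhaust $X$ using $sL_\theta$. The only cosmetic differences are that the paper merges $\kappa$ and $s$ into a single parameter $m=\kappa(X)sL_\theta(X)$ and phrases condition~(iii) in terms of families $\mathcal U\in\mathcal U_F$ of open neighborhoods (implicitly restricted to interiors of the $\mathcal V_x$), whereas you work directly with selections from the closed-neighborhood bases $\mathcal V_x$; neither change affects the substance of the argument.
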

\begin{proof}
Let $\kappa(X)sL_\theta(X)=m$ and $nu(X)=u$. For each $x\in X$ 
let $\mathcal{V}_x$ be a collection of closed neighborhoods of $x$ with  
$|\mathcal{V}_x|\le m$ and such that if $W$ is
a closed neighborhood of $x$ then $W$ contains a member of 
$\mathcal{V}_x$. Let $x_0$ be an arbitrary point in $X$.  
Recursively we construct a family 
$\{F_\alpha:\alpha<m^+\}$ of subsets of $X$ with the following 
properties:
\begin{itemize}
\item[(i)] $F_0=\{x_0\}$ and 
$\cl_\theta(\cup_{\beta<\alpha}F_\beta)\subset F_\alpha$ for 
every $0<\alpha<m^+$;
\item[(ii)] $|F_\alpha|\le u^m$ for every $\alpha < m^+$;
\item[(iii)] for every $\alpha<m^+$, and every 
$F\subset \cl_\theta(\cup_{\beta<\alpha}F_\beta)$ with $|F|\le m$ if 
$X\setminus \overline{\cup\mathcal{U}}\ne\emptyset$ for some 
$\mathcal{U}\in \mathcal{U}_F$, then
$F_\alpha\setminus \overline{\cup\mathcal{U}}\ne \emptyset$.
\end{itemize}

Suppose that the sets $\{F_\beta:\beta<\alpha\}$ satisfying (i)-(iii) have 
already been defined. We will define $F_\alpha$. Since $|F_\beta|\le u^m$ 
for each $\beta < \alpha$, we have 
$|\cup_{\beta<\alpha}F_\beta|\le u^m\cdot m^+=u^m$. 
Then it follows from Theorem \ref{TIG3}, that 
$|\cl_\theta(\cup_{\beta<\alpha}F_\alpha)|\le u^m$. 
Therefore there are at most $u^m$ subsets $F$ of 
$\cl_\theta(\cup_{\beta<\alpha}F_\alpha)$ with $|F|\le m$ and 
for each such set $F$ we have $|\mathcal{U}_F|\le m^m=2^m\le u^m$.
For each $F\subset\cl_\theta(\cup_{\beta<\alpha}F_\alpha)$ with $|F|\le m$
and each $\mathcal{U}\in\mathcal{U}_F$ for which 
$X\setminus \overline{\cup\mathcal{U}}\ne\emptyset$ we choose a point in
$X\setminus \overline{\cup\mathcal{U}}\ne\emptyset$ and let $E_\alpha$ be 
the set of all these points. Clearly $|E_\alpha|\le\ u^m$. Let 
$F_\alpha=\cl_\theta(E_\alpha\cup (\cup_{\beta<\alpha}F_\alpha))$. Then it follows from our 
construction that $F_\alpha$ satisfies (i) and (iii) while (ii) follows  
from Theorem \ref{TIG3}.

Now let $G=\cup_{\alpha<m^+}F_\alpha$. Clearly $|G|\le u^m\cdot m^+=u^m$. 
We will show that $G$ is $\theta$-closed. Suppose 
the contrary and let $x\in \cl_\theta(G)\setminus G$. Then for 
each $U\in\mathcal{V}_x$ we have $U\cap G\ne\emptyset$ and therefore there 
is $\alpha_U < m^+$ such that $U \cap F_{\alpha_U}\ne\emptyset$. Since 
$|\{\alpha_U: U\in \mathcal{V}_x\}|\le 
m$, there is $\beta< m^+$ such that $\beta > \alpha_U$ for every 
$U\in\mathcal{V}_x$ and therefore $x\in \cl_\theta(F_\beta)\subset G$, a 
contradiction.

To finish the proof it remains to check that $G = X$. Suppose that there 
is $x\in X\setminus G$. Then there is $V\in \mathcal{V}_x$ such that 
$V\cap G=\emptyset$. Hence, for every $y\in G$ there is
$V_y \in \mathcal{V}_y$ such that $V_y \cap \Int(V)=\emptyset$. Since 
$\{\Int(V_y) : y \in G\}$ is an open cover of $G$ and $G$ is 
$\theta$-closed, there is $F\subset G$ with $|F|\le m$ such
that $G \subset \overline{\cup_{y\in F}\Int(V_y)}$.
Since $|F|\le m$, there is $\beta<m^+$ such that $F\subset F_\beta$.
Then for $\mathcal{U}:= \{\Int(V_y) : y \in F\}$ we have 
$\mathcal{U}\in \mathcal{U}_F$ and 
$x\in X\setminus \overline{\cup\mathcal{U}}$.
Then it follows from our construction that 
$F_{\beta+1}\setminus \overline{\cup\mathcal{U}}\ne\emptyset$, a 
contradiction since $F_{\beta+1}\subset G\subset \overline{\cup\mathcal{U}}$.
\end{proof}

\begin{corollary}\label{CIG5}
For every topological space $X$, $|X|\le nu(X)^{\chi(X)wL_c(X)}$.
\end{corollary}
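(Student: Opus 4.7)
The plan is to reduce the corollary directly to Theorem \ref{TIG4} by establishing the two coordinate-wise cardinal inequalities $\kappa(X)\le\chi(X)$ and $sL_\theta(X)\le wL_c(X)$, after which monotonicity of exponentiation finishes the proof.

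For the first inequality, I would fix $x\in X$ and any local base $\{U_\alpha:\alpha<\chi(X)\}$ at $x$, then declare $\mathcal{V}_x:=\{\overline{U}_\alpha:\alpha<\chi(X)\}$. Given any $W\in\mathcal{N}_x$, some $U_\alpha$ lies inside $W$, and hence $\overline{U}_\alpha\subset\overline{W}$, which is exactly the requirement in Definition~1.1. Since this works uniformly in $x$, we get $\kappa(X)\le\chi(X)$.

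For the second inequality, the key point I would use is the standard fact that $\cl_\theta(A)$ is always closed in $X$: if $x\notin\cl_\theta(A)$ then some $U\in\mathcal{N}_x$ satisfies $\overline{U}\cap A=\emptyset$, and for every $y\in U$ we have $U\in\mathcal{N}_y$ with $\overline{U}\cap A=\emptyset$, so $U\subset X\setminus\cl_\theta(A)$. Granted this, any open cover $\mathcal{U}$ of $\cl_\theta(A)$ is an open cover of a closed set, so applying the definition of $wL_c(X)$ yields $\mathcal{V}\subset\mathcal{U}$ with $|\mathcal{V}|\le wL_c(X)$ and $\cl_\theta(A)\subset\overline{\cup\mathcal{V}}$. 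Since $A\subset\cl_\theta(A)$, this gives $A\subset\overline{\cup\mathcal{V}}$, verifying the requirement for $sL_\theta(X)$; thus $sL_\theta(X)\le wL_c(X)$.

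Combining, $\kappa(X)\,sL_\theta(X)\le\chi(X)\,wL_c(X)$, and since $nu(X)\ge 2$, Theorem \ref{TIG4} gives
$$|X|\le nu(X)^{\kappa(X)\,sL_\theta(X)}\le nu(X)^{\chi(X)\,wL_c(X)},$$
as required. There is no substantive obstacle here; the only delicate observation is the closedness of $\cl_\theta(A)$ (without it, applying $wL_c$ would not be justified), but this is a routine property of the $\theta$-closure operator.
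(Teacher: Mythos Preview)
Your proof is correct and matches the paper's intended approach: the corollary is stated without proof immediately after Theorem~\ref{TIG4}, relying on the inequalities $\kappa(X)\le\chi(X)$ and $sL_\theta(X)\le sL(X)\le wL_c(X)$ already recorded in the paper. Your direct route to $sL_\theta(X)\le wL_c(X)$ via the closedness of $\cl_\theta(A)$ is a minor, correct shortcut past the intermediate $sL(X)$.
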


\begin{remark}
In 1979, A. V. Arhangel{\cprime}ski{\u\i} asked if the inequality $|X|\le 2^{\chi(X)wL_c(X)}$ was true for every Hausdorff topological space $X$ (see \cite[Question 2]{Hod06}). It follows immediately from Corollary \ref{CIG5} that the answer of his question is in the affirmative for all spaces with $nu(X)\le 2^\omega$. But as Example 3.10 in \cite{Got12b} shows, there are $T_0$-topological spaces for which that inequality is not true (in that example $nu(X)>2^{\omega}$).
\end{remark}

Modifying slightly the proof of Theorem \ref{TIG4} one can prove the 
following result.

\begin{theorem}
For every topological space $X$, $|X|\le nu(X)^{\kappa(X)aL(X)}$.
\end{theorem}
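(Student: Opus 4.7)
The plan is to mimic the proof of Theorem \ref{TIG4}, keeping its recursive construction and only replacing the single step where $sL_\theta(X)$ is invoked. Set $m=\kappa(X)aL(X)$ and $u=nu(X)$, fix for each $x\in X$ a collection $\mathcal{V}_x$ of at most $m$ closed neighborhoods of $x$ as in the definition of $\kappa(X)$, and pick any $x_0\in X$. I would construct by transfinite recursion a chain $\{F_\alpha:\alpha<m^+\}$ of subsets of $X$ satisfying exactly conditions (i), (ii) and (iii) from the proof of Theorem \ref{TIG4}; the verification that $F_\alpha$ can be arranged with $|F_\alpha|\le u^m$ uses Theorem \ref{TIG3} verbatim, and the set $G=\bigcup_{\alpha<m^+}F_\alpha$ comes out to have cardinality at most $u^m$ and to be $\theta$-closed by the same cofinality argument.

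The only point at which the argument has to diverge is in the proof that $G=X$. Suppose for contradiction that $x\in X\setminus G$. As in Theorem \ref{TIG4}, $\theta$-closedness of $G$ supplies $V\in\mathcal{V}_x$ with $V\cap G=\emptyset$ and, for each $y\in G$, some $V_y\in\mathcal{V}_y$ with $V_y\cap\Int(V)=\emptyset$. The difficulty here is that $aL(X)$ governs only open covers of $X$, not open covers of the closed set $G$, so the family $\{\Int(V_y):y\in G\}$ cannot be handled directly the way $sL_\theta(X)$ handled it before. My fix is to exploit $\theta$-closedness on the complement as well: since $X\setminus G=X\setminus\cl_\theta(G)$, for every $z\in X\setminus G$ I can select $V_z\in\mathcal{V}_z$ with $V_z\cap G=\emptyset$. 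The enlarged family $\{\Int(V_y):y\in G\}\cup\{\Int(V_z):z\in X\setminus G\}$ is then an open cover of $X$, and $aL(X)\le m$ produces a subfamily, indexed by $F_1\subset G$ and $F_2\subset X\setminus G$ with $|F_1|+|F_2|\le m$, whose closures cover $X$.

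The crucial observation is that $\overline{\Int(V_z)}\subset V_z$ is disjoint from $G$ for every $z\in F_2$, so no point of $G$ can lie in $\bigcup_{z\in F_2}\overline{\Int(V_z)}$; this forces $G\subset\bigcup_{y\in F_1}\overline{\Int(V_y)}=\overline{\bigcup\mathcal{U}}$ for $\mathcal{U}=\{\Int(V_y):y\in F_1\}\in\mathcal{U}_{F_1}$, while $\overline{\bigcup\mathcal{U}}\subset X\setminus\Int(V)$ continues to miss $x$. Picking $\gamma<m^+$ with $F_1\subset F_\gamma$ (possible since $|F_1|\le m$ and $m^+$ is regular) and applying property (iii) at $\alpha=\gamma+1$ yields a point of $F_{\gamma+1}$ outside $\overline{\bigcup\mathcal{U}}$, contradicting $F_{\gamma+1}\subset G\subset\overline{\bigcup\mathcal{U}}$. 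The main obstacle, namely the inability of $aL(X)$ to directly cover the closed set $G$, is overcome by extending the cover of $G$ to a cover of $X$ using $\theta$-closedness-supplied neighborhoods on the complement that, by construction, cannot interfere with the covering argument on $G$.
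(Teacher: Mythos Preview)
Your argument is correct and follows essentially the same route as the paper: extend the cover of $G$ to an open cover of $X$ using $\theta$-closedness on $X\setminus G$, apply $aL(X)$, discard the complement-indexed sets (which miss $G$), and contradict clause~(iii). The paper makes one cosmetic change you do not: it rewrites clause~(iii) in terms of $\mathcal{C}_F$ and $\cup\mathcal{C}$ rather than $\mathcal{U}_F$ and $\overline{\cup\mathcal{U}}$, which lets it work directly with the closed neighborhoods $V_y$ instead of passing through $\overline{\Int(V_y)}$; either formulation yields the same contradiction. One tiny slip: $\bigcup_{y\in F_1}\overline{\Int(V_y)}\subset\overline{\bigcup\mathcal{U}}$ rather than $=$, but only the inclusion is used.
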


\begin{proof}
Let $\kappa(X)aL(X)=m$ and $nu(X)=u$. For each $x\in X$ 
let $\mathcal{V}_x$ be a collection of closed neighborhoods of $x$ with  
$|\mathcal{V}_x|\le m$ and such that if $W$ is
a closed neighborhood of $x$ then $W$ contains a member of 
$\mathcal{V}_x$. Let $x_0$ be an arbitrary point in $X$.  
Recursively we construct a family 
$\{F_\alpha:\alpha<m^+\}$ of subsets of $X$ with the following 
properties:
\begin{itemize}
\item[(i)] $F_0=\{x_0\}$ and 
$\cl_\theta(\cup_{\beta<\alpha}F_\beta)\subset F_\alpha$ for 
every $0<\alpha<m^+$;
\item[(ii)] $|F_\alpha|\le u^m$ for every $\alpha < m^+$;
\item[(iii)] for every $\alpha<m^+$, and every 
$F\subset \cl_\theta(\cup_{\beta<\alpha}F_\beta)$ with $|F|\le m$ if 
$X\setminus \cup\mathcal{C}\ne\emptyset$ for some 
$\mathcal{C}\in \mathcal{C}_F$, then
$F_\alpha\setminus \cup\mathcal{C}\ne \emptyset$.
\end{itemize}

Suppose that the sets $\{F_\beta:\beta<\alpha\}$ satisfying (i)-(iii) have 
already been defined. We will define $F_\alpha$. Since $|F_\beta|\le u^m$ 
for each $\beta < \alpha$, we have 
$|\cup_{\beta<\alpha}F_\beta|\le u^m\cdot m^+=u^m$. 
Then it follows from Theorem \ref{TIG3}, that 
$|\cl_\theta(\cup_{\beta<\alpha}F_\alpha)|\le u^m$. 
Therefore there are at most $u^m$ subsets $F$ of 
$\cl_\theta(\cup_{\beta<\alpha}F_\alpha)$ with $|F|\le m$ and 
for each such set $F$ we have $|\mathcal{C}_F|\le m^m=2^m\le u^m$.
For each $F\subset\cl_\theta(\cup_{\beta<\alpha}F_\alpha)$ with $|F|\le m$
and each $\mathcal{C}\in\mathcal{C}_F$ for which 
$X\setminus \cup\mathcal{C}\ne\emptyset$ we choose a point in
$X\setminus \cup\mathcal{C}\ne\emptyset$ and let $E_\alpha$ be 
the set of all these points. Clearly $|E_\alpha|\le\ u^m$. Let 
$F_\alpha=\cl_\theta(E_\alpha\cup (\cup_{\beta<\alpha}F_\alpha))$. Then it follows from our 
construction that $F_\alpha$ satisfies (i) and (iii) while (ii) follows  
from Theorem \ref{TIG3}.

Now let $G=\cup_{\alpha<m^+}F_\alpha$. Clearly $|G|\le u^m\cdot m^+=u^m$. 
We will show that $G$ is $\theta$-closed. Suppose 
the contrary and let $x\in \cl_\theta(G)\setminus G$. Then for 
each $U\in\mathcal{V}_x$ we have $U\cap G\ne\emptyset$ and therefore there 
is $\alpha_U < m^+$ such that $U \cap F_{\alpha_U}\ne\emptyset$. Since 
$|\{\alpha_U: U\in \mathcal{V}_x\}|\le 
m$, there is $\beta< m^+$ such that $\beta > \alpha_U$ for every 
$U\in\mathcal{V}_x$ and therefore $x\in \cl_\theta(F_\beta)\subset G$, a 
contradiction.

To finish the proof it remains to check that $G = X$. Suppose that there 
is $x\in X\setminus G$. Then there is $V\in \mathcal{V}_x$ such that 
$V\cap G=\emptyset$. Hence for every $y\in G$ there is
$V_y \in \mathcal{V}_y$ such that $V_y \cap \Int(V)=\emptyset$ and for 
every $z\in (X\setminus \{x\})\setminus G$  there is
$V_z \in \mathcal{V}_z$ such that $V_z \cap G=\emptyset$. Since 
$\{\Int(V_y) : y \in G\}\cup\{\Int(V_z) : z \in (X\setminus \{x\})\setminus G\}\cup\{\Int(V)\}$ is an open cover of $X$, there is $F'\subset X$ with $|F'|\le m$ such
that $X \subset {\cup_{t\in F'}V_t}$. Let $F:=F'\cap G\neq\emptyset$. 
Then $G\subset \cup\{V_y : y \in F\}$.
Since $|F|\le m$, there is $\beta<m^+$ such that $F\subset F_\beta$.
Then for $\mathcal{C}:= \{V_y : y \in F\}$ we have 
$\mathcal{C}\in \mathcal{C}_F$ and 
$x\in X\setminus \cup\mathcal{C}$.
Then it follows from our construction that 
$F_{\beta+1}\setminus \cup\mathcal{C}\ne\emptyset$, a 
contradiction since $F_{\beta+1}\subset G\subset \cup\mathcal{C}$.
\end{proof}

\begin{corollary}
For every topological space $X$ with $nu(X)<\omega$ (or, equivalently, 
$U(X)<\omega$), $|X|\le 2^{\kappa(X)aL(X)}$.
\end{corollary}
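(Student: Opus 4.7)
The plan is to mimic the recursive construction used in the proof of Theorem \ref{TIG4}, replacing the role of $sL_\theta(X)$ by $aL(X)$ and therefore working with $\cup\mathcal{C}$ in place of $\overline{\cup\mathcal{U}}$. Set $m:=\kappa(X)aL(X)$ and $u:=nu(X)$, fix for every $x\in X$ a family $\mathcal{V}_x$ of closed neighborhoods of $x$ with $|\mathcal{V}_x|\le m$ and the property that every closed neighborhood of $x$ contains some member of $\mathcal{V}_x$, and fix an arbitrary $x_0\in X$. We build by transfinite recursion a chain $\{F_\alpha:\alpha<m^+\}$ of subsets of $X$ satisfying (i) $F_0=\{x_0\}$ and $\cl_\theta(\cup_{\beta<\alpha}F_\beta)\subset F_\alpha$, (ii) $|F_\alpha|\le u^m$, and (iii) for every $F\subset \cl_\theta(\cup_{\beta<\alpha}F_\beta)$ with $|F|\le m$ and every $\mathcal{C}\in\mathcal{C}_F$ such that $X\setminus\cup\mathcal{C}\ne\emptyset$, we have $F_\alpha\setminus\cup\mathcal{C}\ne\emptyset$.

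The recursive step is as in Theorem \ref{TIG4}: count subsets of size $\le m$ (at most $u^m$ of them by Theorem \ref{TIG3} applied to bound $|\cl_\theta(\cup_{\beta<\alpha}F_\beta)|$), note that for each such $F$ we have $|\mathcal{C}_F|\le m^m\le u^m$, pick one witness point $X\setminus\cup\mathcal{C}$ for each relevant pair $(F,\mathcal{C})$ to form a set $E_\alpha$ of size $\le u^m$, and take $F_\alpha=\cl_\theta(E_\alpha\cup\cup_{\beta<\alpha}F_\beta)$; property (ii) is preserved by another appeal to Theorem \ref{TIG3}. Put $G:=\cup_{\alpha<m^+}F_\alpha$, so $|G|\le u^m\cdot m^+=u^m$, and verify that $G$ is $\theta$-closed by exactly the argument used for Theorem \ref{TIG4} (if $x\in\cl_\theta(G)\setminus G$ then each $U\in\mathcal{V}_x$ meets some $F_{\alpha_U}$, and choosing $\beta<m^+$ above all $\alpha_U$ gives $x\in\cl_\theta(F_\beta)\subset G$).

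The core of the argument is the verification that $G=X$. Suppose for contradiction that $x\in X\setminus G$; then there is $V\in\mathcal{V}_x$ with $V\cap G=\emptyset$. For every $y\in G$ we have $y\notin V$, and since $V$ is closed we may pick an open neighborhood $W_y$ of $y$ with $W_y\cap V=\emptyset$; then $\overline{W_y}\subset X\setminus\Int(V)$, so by the defining property of $\mathcal{V}_y$ there is $V_y\in\mathcal{V}_y$ with $V_y\cap\Int(V)=\emptyset$. Similarly, since $G$ is $\theta$-closed, for every $z\in X\setminus(G\cup\{x\})$ we may pick $V_z\in\mathcal{V}_z$ with $V_z\cap G=\emptyset$. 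Then the interiors $\{\Int(V_y):y\in G\}\cup\{\Int(V_z):z\in X\setminus(G\cup\{x\})\}\cup\{\Int(V)\}$ form an open cover of $X$. Applying $aL(X)\le m$ gives a subfamily of size $\le m$ whose union of closures equals $X$; intersecting the index set with $G$ yields a nonempty $F\subset G$ with $|F|\le m$ such that $G\subset\cup\{V_y:y\in F\}$.

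Since $|F|\le m$, the set $F$ lies inside $F_\beta$ for some $\beta<m^+$, and with $\mathcal{C}:=\{V_y:y\in F\}\in\mathcal{C}_F$ we have $x\notin\cup\mathcal{C}$ (because each $V_y$ avoids $\Int(V)$ which contains $x$); hence condition (iii) applied at stage $\beta+1$ produces a point of $F_{\beta+1}\subset G$ lying outside $\cup\mathcal{C}$, contradicting $G\subset\cup\mathcal{C}$. I expect the only nontrivial step to be this final covering argument, specifically the observation that closed neighborhoods $V_y$ of points $y\in G$ avoiding $\Int(V)$ actually exist in $\mathcal{V}_y$ (secured by the disjointness of $V$ from $G$ and the defining property of $\mathcal{V}_y$), and the observation that $G$ being $\theta$-closed yields the analogous $V_z$ for points outside $G\cup\{x\}$ so that the whole space, not merely $G$, is covered, allowing $aL(X)$ to be invoked.
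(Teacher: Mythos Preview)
Your argument is correct and reproduces, essentially verbatim, the paper's proof of the immediately preceding theorem, which establishes $|X|\le nu(X)^{\kappa(X)aL(X)}$ for every topological space $X$; the recursive construction, the $\theta$-closedness of $G$, and the final covering argument using $aL(X)$ with the auxiliary sets $V_z$ for $z\notin G\cup\{x\}$ are all exactly as in the paper. In the paper the corollary itself carries no separate proof: once the theorem is in hand, the hypothesis $nu(X)<\omega$ together with $\kappa(X)aL(X)\ge\omega$ gives $nu(X)^{\kappa(X)aL(X)}=2^{\kappa(X)aL(X)}$ immediately, so your write-up does more work than strictly necessary for the corollary but is entirely sound.
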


\begin{corollary}
For every Urysohn space $X$, $|X|\le 2^{\kappa(X)aL(X)}$.
\end{corollary}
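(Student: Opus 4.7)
The plan is to obtain this corollary as an immediate specialization of the preceding theorem, which gives the bound $|X|\le nu(X)^{\kappa(X)aL(X)}$ for every topological space. So the whole task reduces to showing that $nu(X)=2$ whenever $X$ is Urysohn; once that is in hand, substituting into the theorem produces $|X|\le 2^{\kappa(X)aL(X)}$ and finishes the proof.

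To see that $nu(X)=2$ for a Urysohn space, I would invoke Remark \ref{R2}(d), which states that in a Urysohn space the only (maximal) finitely non-Urysohn subsets are singletons. For completeness I would also give the one-line direct argument: if $A\subset X$ were finitely non-Urysohn with $|A|\ge 2$, pick two distinct points $a,b\in A$; by the Urysohn separation property there exist $U_a\in\mathcal{N}_a$ and $U_b\in\mathcal{N}_b$ with $\overline{U}_a\cap\overline{U}_b=\emptyset$, and then the finite subset $F=\{a,b\}$ together with the family $\{\overline{U}_a,\overline{U}_b\}\in\mathcal{C}_F$ violates the finitely non-Urysohn condition. Hence every finitely non-Urysohn subset of $X$ is a singleton, and Definition \ref{DIG2} yields $nu(X)=1+1=2$.

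With $nu(X)=2$ established, the previous theorem gives $|X|\le 2^{\kappa(X)aL(X)}$, which is the desired inequality. There is no real obstacle here beyond recalling the remark about Urysohn spaces; the nontrivial content has already been packaged into the preceding theorem, and this corollary simply records what happens in the smallest nontrivial case of $nu(X)$.
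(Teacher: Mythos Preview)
Your proposal is correct and matches the paper's intended derivation: the corollary is stated without proof in the paper, as an immediate consequence of the preceding theorem together with the observation (Remark~\ref{R2}(d)) that $nu(X)=2$ for Urysohn spaces. Your supplementary direct argument that no two-point subset of a Urysohn space can be finitely non-Urysohn is exactly the content of that remark, so there is nothing to add.
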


\begin{remark}
In parallel with Definition \ref{DIG2} one can introduce the notion of a \emph{non-Hausdorff number} of a topological space. For results related to that notion see \cite{Got12b}. 
\end{remark}

\end{document}